\newtheorem{theorem}{Theorem}[section]
\newtheorem{Lemma}[theorem]{Lemma}
\newtheorem{Corollary}[theorem]{Corollary}
\newtheorem{Definition}[theorem]{Definition}
\newtheorem{Example}[theorem]{Example}
\newtheoremstyle{T1.1}{}{}{}{}{\bfseries}{.}{.5em}{}
\theoremstyle{T1.1}
\newtheorem*{T1.1}{Proof of Theorem 1.1}
\newtheoremstyle{T1.2}{}{}{}{}{\bfseries}{.}{.5em}{}
\theoremstyle{T1.2}
\newtheorem*{T1.2}{Proof of Theorem 1.2}
\newtheoremstyle{Proof}{}{}{}{}{\bfseries}{.}{.5em}{}
\theoremstyle{Proof}
\newtheoremstyle{remark}{}{}{}{}{\bfseries}{\textbf{.}}{.5em}{}
\theoremstyle{remark}
\newtheorem{remark}[theorem]{Remark}
\newtheoremstyle{example}{}{}{}{}{\bfseries}{\textbf{.}}{.5em}{}
\theoremstyle{example}
\begin{document} 
\title{Bi-Asymptotic $c$-Expansivity}            
	
\author[R. Nageshwar, A.G. Khan and T. Das]{Rohit Nageshwar$^1$, Abdul Gaffar Khan$^2$, Tarun Das$^{1,*}$}
	
\keywords{Decomposition theorem, Expansive, Shadowing property.\vspace*{0.08cm}\\ 
\hspace*{0.3cm} 2020 \textit{Mathematics Subject Classification.} Primary: 37B20; Secondary: 37B65.\\
\hspace*{0.17cm}
$^{*}$ Corresponding author.\\
\hspace*{0.3cm} \textit{E-mail addresses:} tarukd@gmail.com (T. Das), rohitnageshwar1705@gmail.com (R. Nageshwar) and gaffarkhan18@gmail.com (A.G. Khan). \\
\hspace*{0.17cm}$^{1}$ \textit{Department of Mathematics, Faculty of Mathematical Sciences,} \textit{University of Delhi, Delhi - 110007, India.}\\
\hspace*{0.17cm}$^{2}$ \textit{Kirori Mal College, Department of Mathematics, University of Delhi, Delhi - 110007, Delhi, India.}
	}
	
\begin{abstract}
In this paper, we define bi-asymptotically $c$-expansive maps on metric spaces and study its relationship with other variants of expansivity such as bi-asymptotically expansive maps and $N$-expansive maps. We also provide an example to establish that expansive homeomorphisms need not be bi-asymptotically expansive. Finally we prove a spectral decomposition theorem for bi-asymptotically $c$-expansive continuous surjective maps with the shadowing property on compact metric spaces. 
\end{abstract}
\maketitle
	
\section{Introduction}
In \cite{U}, Utz has introduced expansive homeomorphisms which has a significant role in the modern theory of dynamical systems. This has led to the introduction of several variants of expansivity such as positive expansivity, $c$-expansivity, bi-asymptotic expansivity and mild expansivity \cite{AH, E, LLN, LMV}.
In \cite{A,B1}, the authors have studied another important dynamical property known as shadowing property. In \cite{A1}, the author has proved that the set of all non-wandering points of expansive homeomorphisms with the shadowing property on compact metric spaces can be decomposed into finitely many  pairwise disjoint invariant subsets such that the restriction of the map on each of these subsets is topologically transitive. 
In the current literature, this result is known as spectral decomposition theorem.
This has motivated several researchers to look into the other possible decomposition of a given system in the presence of other variants of expansivity (please see \cite{B,DDS,DLRW, kDD,LLN}).
In \cite{LMV}, Morales et al. have introduced bi-asymptotically expansive maps and proved that the restriction to the set of all non-wandering points of a bi-asymptotically expansive homeomorphism with the shadowing property on a compact metric space can be decomposed into finitely many topologically transitive subsystems. 
%Further each such subset can be divided into finite number of disjoint closed subsets in such a way that restriction of some iteration of the map to these closed sets is topologically mixing.
%In \cite{LMV}, Morales et al. have introduced the notion of bi-asymptotic expansivity and proved that the set of all non-wandering points of aevery bi-asymptotically expansive homeomorphism with the shadowing property on a compact metric space decomposed the non-wandering set into finite number of disjoint subsets which constitutes the transitive  subsystems. Further each such subset can be divided into finite number of disjoint closed subsets in such a way that restriction of some iteration of the map to these closed sets is topologically mixing.
\par
\vspace*{0.15cm}

In this paper, we introduce a weaker form of $c$-expansivity, namely bi-asymptotically $c$-expansive maps and prove such a decomposition theorem for bi-asymptotically $c$-expansive continuous surjective maps with the shadowing property on compact metric spaces. Precisely, we prove the following:

\begin{theorem}\label{T1.1}
Let $f: X \rightarrow X$ be a bi-asymptotically $c$-expansive continuous surjective map on a compact metric space $X$. If $f$ has the shadowing property, then the following holds:
		
\begin{enumerate}[leftmargin=16pt, align=left, labelwidth=\parindent, labelsep=4pt]
\item $\Omega( f )$ contains a finite sequence $B_{i}~ (1 \leq i \leq k)$ of pairwise disjoint $f$-invariant closed subsets of $X$, where $B_i$ are known as basic sets, such that
\begin{itemize}
\item[$a.$] $\Omega(f) = \bigcup\limits_{i=1}^{k}B_{i}$;
\item[$b.$] $f|_{B_{i}} : B_i \rightarrow B_i$ is topologically transitive, for each $1\leq i\leq k$.
\end{itemize}
			
\item For a basic set $B\in \lbrace B_{i}\mid 1\leq i\leq k\rbrace$, there exists an $\mathsf{m} \in \mathbb{N}^{+}$ and a finite sequence $C_{i},~ (0 \leq i \leq \mathsf{m} - 1)$ of closed subsets of $X$ such that
\begin{itemize}
\item[$a.$] $C_{i}$'s are pairwise disjoint, $f(C_{i}) = C_{i+1}$ and $f^{\mathsf{m}}(C_{i}) = C_{i}$;
\item[$b.$]  $B = \bigcup\limits_{i=0}^{\mathsf{m}-1}C_{i}$;
\item[$c.$] $f^{\mathsf{m}}|_{C_{i}} : C_{i} \rightarrow C_{i}$ is topologically mixing, for each $0\leq i\leq \mathsf{m}-1$.
\end{itemize}
\end{enumerate}
\end{theorem}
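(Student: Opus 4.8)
The plan is to adapt the classical chain-recurrence proof of the spectral decomposition (as carried out for expansive homeomorphisms with shadowing, and extended by Morales et al.\ to the bi-asymptotic homeomorphism setting) to the present surjective, possibly non-invertible, situation. Since $f$ need not be injective, I would phrase everything in terms of $\epsilon$-chains (pseudo-orbits) and, where a two-sided orbit is required, in terms of full orbits $(x_n)_{n\in\mathbb{Z}}$ satisfying $f(x_n)=x_{n+1}$; this is exactly the structure on which bi-asymptotic $c$-expansivity is formulated. The first step is to record the basic identity $\Omega(f)=CR(f)=\overline{Per(f)}$, where $CR(f)$ is the chain recurrent set: the inclusions $\Omega(f)\subseteq CR(f)$ and $\overline{Per(f)}\subseteq\Omega(f)$ are general, while the shadowing property supplies $CR(f)\subseteq\overline{Per(f)}$ by closing up each chain-recurrent pseudo-orbit to a genuine periodic orbit. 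This reduces part (1) to the analysis of $CR(f)$.

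For part (1) I would define the chain relation on $CR(f)$: $x\sim y$ precisely when, for every $\epsilon>0$, there are $\epsilon$-chains from $x$ to $y$ and from $y$ to $x$. This is an equivalence relation whose classes, the chain components, are closed and $f$-invariant, and I would take these to be the basic sets $B_i$; the covering property $\Omega(f)=\bigcup_i B_i$ and pairwise disjointness are then immediate. Finiteness I would obtain by showing each $B_i$ is open in $CR(f)$: the shadowing property makes the chain relation locally constant on $CR(f)$, so the $B_i$ form a partition of the compact set $CR(f)$ into relatively open (hence clopen) pieces, forcing $k<\infty$. Topological transitivity of $f|_{B_i}$ is the standard upgrade from chain transitivity: given two nonempty open subsets of $B_i$, chain transitivity produces a pseudo-orbit meeting both, and shadowing replaces it by a genuine orbit doing the same.

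The step where the hypothesis on expansivity is genuinely needed, and which I expect to be the main obstacle, is controlling distinct orbits that remain $c$-close, both to guarantee that the basic sets are truly separated and to run the cyclic decomposition in part (2) cleanly. For a $c$-expansive map two full orbits staying $c$-close coincide outright; here the hypothesis only yields that they are bi-asymptotic, so the delicate point is to show that the bi-asymptotic conclusion, combined with shadowing and compactness, still suffices. Concretely, I would argue that if two distinct full orbits inside $CR(f)$ stay within $c$ for all time, then bi-asymptoticity makes them forward and backward asymptotic; a closing/shadowing argument then places their endpoints in a single chain component, so distinct basic sets cannot be approached by such $c$-close orbit pairs. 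Verifying this while juggling the non-invertibility, moving between $\epsilon$-chains in $X$ and full orbits, possibly by passing to the inverse limit $(\widehat{X},\widehat{f})$ on which $\widehat{f}$ is an expansive homeomorphism inheriting shadowing, is the technical heart of the argument.

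For part (2) I would fix a basic set $B$ and perform the cyclic refinement standard for a chain-transitive system with shadowing. Fixing a point of $B$, let $\mathsf{m}$ be the greatest common divisor of the lengths of $\epsilon$-chains from that point back to itself; this value stabilises for small $\epsilon$ and is independent of the chosen point. Declaring $x$ and $y$ equivalent when they are joined by chains of length divisible by $\mathsf{m}$ refines $B$ into classes $C_0,\dots,C_{\mathsf{m}-1}$, which one checks are closed, pairwise disjoint, satisfy $f(C_i)=C_{i+1}$ and $f^{\mathsf{m}}(C_i)=C_i$, and cover $B$. Finally, topological mixing of $f^{\mathsf{m}}|_{C_i}$ follows because, by the definition of $\mathsf{m}$, every sufficiently long admissible chain length inside $C_i$ occurs, so for any two nonempty open subsets all large iterates of one meet the other; shadowing then converts this into genuine mixing, completing the proof.
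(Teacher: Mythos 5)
Your overall framework for part (1) (chain classes as basic sets, the identity $CR(f)=\Omega(f)=\overline{Per(f)}$, transitivity from chain transitivity plus shadowing) matches the paper's, but your proposal systematically attributes to shadowing alone several steps that genuinely require the expansivity hypothesis, and at exactly those points the argument fails. First, ``shadowing supplies $CR(f)\subseteq\overline{Per(f)}$ by closing up each chain-recurrent pseudo-orbit to a genuine periodic orbit'' is false: shadowing a periodic pseudo-orbit produces a nearby orbit, not a periodic one, unless expansivity forces the shadowing point to be unique. An odometer has the shadowing property, its chain recurrent set is the whole space, and it has no periodic points at all; the paper's Lemma 5.1 instead derives the identity from asymptotic expansivity (citing Morales et al.), which bi-asymptotic $c$-expansivity implies. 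Second, and more seriously, your finiteness argument --- ``the shadowing property makes the chain relation locally constant on $CR(f)$'' --- is also false: the paper's own Example 4.9 is a homeomorphism with shadowing on a compact metric space whose chain recurrent set contains infinitely many periodic orbits accumulating on one another, each its own chain class, so shadowing alone gives neither openness nor finiteness of the basic sets. Your later acknowledgment that expansivity is needed ``to guarantee that the basic sets are truly separated'' points at the wrong mechanism: you consider two full orbits staying $\mathfrak{c}$-close for all time, but nearby points of distinct basic sets need not admit such orbit pairs. What is actually needed (the paper's Lemma 5.2) is: given $p,y$ with $d(p,y)<\delta$, shadow the pseudo-orbit obtained by concatenating a backward orbit through $p$ with the forward orbit of $y$; the shadowing orbit stays $\mathfrak{c}$-close to both halves, and bi-asymptotic $c$-expansivity upgrades closeness to genuine asymptotics, yielding $W^{u}((p_{n}))\cap W^{s}(y)\neq\emptyset$. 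These heteroclinic points, combined with density of periodic points, are what allow the paper (Lemma 5.5) to build $\epsilon$-chains showing that every periodic point $\delta$-close to $B_{\lambda}$ lies in $B_{\lambda}$, whence openness and finiteness.

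The same defect recurs in your part (2): the claim that the gcd of chain-loop lengths ``stabilises for small $\epsilon$'' is unsupported, and it is precisely where expansivity must enter --- for the odometer (chain transitive, with shadowing) that gcd tends to infinity as $\epsilon\to 0$ and no finite cyclic chain-mixing decomposition exists. The paper avoids the gcd route entirely and uses a Bowen-style construction: for a periodic point $p\in B$ it sets $C_{p}=\overline{W^{s}(p)\cap B}$, proves via Lemma 5.2 and density of periodic points that these sets are open in $B$, pairwise equal or disjoint, cyclically permuted by $f$, and that $f^{\mathsf{m}}$ is topologically mixing on each, with $\mathsf{m}$ read off from the cycle of the sets $C_{f^{i}(p)}$ rather than from chain lengths. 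Your cyclic-refinement route could perhaps be repaired, but only after first proving (using expansivity) openness statements of this kind; as written, the proposal is missing the key idea at both places where bi-asymptotic $c$-expansivity does real work.
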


This paper is distributed as follows. In Section 2, we give necessary preliminaries required for the remaining. In section 3, we give an expansive homeomorphism which is not bi-asymptotically expansive. In section 4, we define and study the properties of bi-asymptotically $c$-expansive maps. In section 5, we give the proof of Theorem \ref{T1.1}.
	
\section{Preliminaries}
Throughout this paper, $(X,d)$ denotes a compact metric space (unless mentioned explicitly). 
For each $\epsilon > 0$ and for each $x\in X$, define  $B(x,\epsilon)=\{y\in X \mid d(x,y)<\epsilon\}$.
For a subset $B$ of $X$, the set $\overline{B}$ denotes the closure of $B$.
The set of all non-negative integers, set of all positive integers and the set of all integers are denoted by $\mathbb{N}$, $\mathbb{N}^{+}$ and $\mathbb{Z}$ respectively.
We define the distance between subsets $A,B$ of $X$ by $d(A, B)=\inf\{d(a,b) \mid a\in A\text{ and } b\in B\}$.
	
The set $X^{\mathbb{Z}} = \lbrace (x_{n}) \mid x_{n}\in X \text{, for each }n\in \mathbb{Z}\rbrace$ denotes the bi-infinite product of $X$ equipped with the product topology. 
The inverse limit space of a continuous surjective map $f:X\rightarrow X$ is a set $X_{f}= \lbrace (x_{n}) \in X^{\mathbb{Z}} \mid f(x_{n})=x_{n+1} \text{, for each }n\in \mathbb{Z}\rbrace$ which is assumed to be equipped with the subspace topology induced from $X^{\mathbb{Z}}$. 
We define a compatible metric $\mathsf{d}$ on $ X^{\mathbb{Z}}$ by 
\begin{center}
$\mathsf{d}((x_{n}),(y_{n}))=\sum\limits_{n=-\infty}^{\infty}\frac{d(x_{n},y_{n})}{2^{|n|}}$, for each pair $(x_{n}), (y_{n})\in X^{\mathbb{Z}}$.
\end{center}
We define a shift homeomorphism $\sigma : X^{\mathbb{Z}}\rightarrow X^{\mathbb{Z}}$ as $\sigma((x_{n})) = (y_{n})$, where $y_n = x_{n+1}$, for each $n\in \mathbb{Z}$ and for each $(x_{n})\in X^{\mathbb{Z}}$. Let $f:X\rightarrow X$ be a continuous surjection. We say that a subset $B$ of $X$ is $f$-invariant if $f(B)=B$. If $B\subset X$ is $f$-invariant, then $f|_{B}:B\rightarrow B$ denotes the restriction map of $f$ on $B$.
Recall that $X_{f}$ is a closed subset of $X^{\mathbb{Z}}$, $X^{\mathbb{Z}}$ and $X_{f}$ are compact spaces, $X_{f}$ is $\sigma$-invariant and $\sigma((x_{n}))=(f(x_{n}))$, for each $(x_{n})\in X_{f}$.
\par
\vspace*{0.15cm}
	
Let $f:X\rightarrow X$ be a continuous surjective map and $g:X\rightarrow X$ be a homeomorphism. The orbit of $x\in X$ under $f$ ($g$, respectively) is denoted by $\mathcal{O}^{+}_{f}(x)=\{f^{n}(x) \mid n \in \mathbb{N}\}$ ($\mathcal{O}_{f}(x) = \{g^{n}(x) \mid n \in \mathbb{Z}\}$, respectively).  
We say that $f$ ($g$, respectively) is positively expansive (expansive, respectively) with a positive expansivity (an expansivity, respectively) constant $\mathfrak{c} > 0$ if $x,y\in X$ satisfy $d(f^{n}(x),f^{n}(y))\leq \mathfrak{c}$, for each $n\in\mathbb{N}$ ($n\in \mathbb{Z}$, respectively), then $x=y$ \cite{E, U}. 
We say that $f$ is $c$-expansive with a $c$-expansivity constant $\mathfrak{c} > 0$ if $(x_n)$, $(y_n) \in X_{f}$ satisfy $d(x_{n}, y_{n})\leq \mathfrak{c}$, for each $n\in \mathbb{Z}$, then $(x_n) = (y_n)$. Recall that a homeomorphism is expansive if and only if it is $c$-expansive \cite{AH}. 
We say that $f$ is asymptotically expansive with an asymptotic expansivity constant $\mathfrak{c} > 0$ if $x,y\in X$ satisfy $d(f^{n}(x),f^{n}(y))\leq \mathfrak{c}$, for each $n\in \mathbb{N}$, then $\lim\limits_{n\rightarrow\infty} d(f^{n}(x),f^{n}(y))=0$. We say that $g$ is bi-asymptotically expansive with a bi-asymptotic expansivity constant $\mathfrak{c} > 0$ if both $g$ and $g^{-1}$ are asymptotically expansive with an asymptotic expansivity constant $\mathfrak{c}$ \cite{LMV}. 
We say that $g$ is a weak bi-asymptotically expansive homeomorphism with a weak bi-asymptotic expansivity constant $\mathfrak{c}>0$ if $x,y\in X$ satisfy $d(g^{n}(x),g^{n}(y))\leq \mathfrak{c}$, for each $n\in \mathbb{Z}$, then $\lim\limits_{n\rightarrow\pm\infty} d(g^{n}(x),g^{n}(y))=0$. We remark that this notion has been studied in \cite{LMV} without adjoining any nomenclature to it.
We say that $g$ is $N$-expansive, where $N\in \mathbb{N}^{+}$ with an expansivity constant $\mathfrak{c} > 0$ if  $\Gamma_{\mathfrak{c}}(x) =  \lbrace y \in X \mid d(g^{n}(x), g^{n}(y))\leq \mathfrak{c}, \text{ for each } n\in \mathbb{Z}\rbrace$ has at most $N$ elements,  for each $x\in  X$ \cite{M}.
\par
\vspace*{0.15cm}
	
Let $f:X\rightarrow X$ be a continuous map, $\eta > 0$ and $\xi = (x_{n})_{n\in \mathbb{N}}$ be a sequence of elements of $X$. We say that $\xi$ is an $\eta$-pseudo orbit of $f$ if $d(f(x_n), x_{n+1})\leq  \eta$, for each $n\in \mathbb{N}$, and $\xi$ is said to be $\epsilon$-traced by a point $y$ of $X$ if $d(f^{n}(y),x_{n}) \leq \epsilon$, for each $n\in \mathbb{N}$. We say that $f$ has the shadowing property if for each $\epsilon>0$, there exists a $\delta > 0$ such that each $\delta$-pseudo orbit of $f$ can be $\epsilon$-traced by some point of $X$
\cite{A,B1}.
\par
\vspace*{0.15cm}
	
Let $f:X\rightarrow X$ be a continuous map, $\alpha > 0$ and $x,y\in X$. Then $x$ is said to be a non-wandering point of $f$ if for each open set $U$ containing $x$, there exists an $n \in\mathbb{N}^{+}$ such that $f^{n}(U)\cap U\neq \emptyset$. The set of all non-wandering points of $f$ is denoted by $\Omega(f)$.
We denote $\Omega_{f} =\{(y_{n})\in \Omega(f)^{\mathbb{Z}}\mid f(y_{n})=y_{n+1}, \text{ for each } n\in \mathbb{Z}\}$.
We say that a finite sequence $x_{0}, x_{1},\ldots, x_{n}$ of elements of $X$, where  $n\in \mathbb{N}^{+}$, is a finite $\alpha$-chain if $d(f(x_{i}),x_{i+1})\leq \alpha$, for each $0\leq i < n$. We say that $x$ is $\alpha$-related to $y$ if there exists an $\alpha$-chain of the form $x=x_{0}, x_{1},\ldots, x_{n}=y$.
We say that $x$ is a chain recurrent point of $f$ if $x$ is $\alpha$-related to itself, for each $\alpha>0$. The set of all chain recurrent points of $f$ is denoted by $CR(f)$.
We say that $x$ is a periodic point of $f$ with period $n\in\mathbb{N}^{+}$ if $f^{n}(x)=x$. The set of all periodic points of $f$ is denoted by $Per(f)$. If $x$ is a periodic point of $f$ with period $1$, then $x$ is said to be a fixed point of $f$. The set of all fixed points of $f$ is denoted by $Fix(f)$. We say that a sequence $(x_{n})$ of elements of $X$ is $m$-periodic, for some $m\in \mathbb{N}^{+}$ if $x_{n}=x_{n+m}$, for each $n\in\mathbb{Z}$ \cite{AH}.
\par 
\vspace*{0.15cm}
	
Let $f:X\rightarrow X$ be a continuous map and $x\in X$. We say that $f$ is topologically transitive if for each pair of non-empty open subsets $U, V$ of $X$, there exists an integer $n> 0$ such that $f^{n}(U)\cap V \neq \emptyset$. Moreover, $f$ is said to be topologically mixing if for each pair of non-empty open subsets $U, V$ of $X$, there exists an integer $N>0$ such that $f^{n}(U)\cap V \neq \emptyset$, for each $n\geq N$. The $\omega$-limit set of $x$ is defined as 
\begin{align*}
\omega(x) =\{y\in X \mid & \text{ if there exists a sequence } ( n_{i})_{i\in \mathbb{N}} \text{ such that } \\
&\lim\limits_{i\rightarrow\infty}f^{n_{i}}(x)=y \text{, where }  n_{i}\rightarrow\infty \text{ as } i\rightarrow \infty \}.
\end{align*} 
\par
\vspace*{0.15cm}
	
Let $f:X\rightarrow X$ be a continuous surjective map, $\eta > 0$, $x\in X$ and $(x _{n})\in X_{f}$. Then,
\begin{itemize}[leftmargin=16pt, align=left, labelwidth=\parindent, labelsep=4pt]
\item  the local stable set $W_{\alpha}^{s}(x)$ is defined as
\begin{center}
$W_{\alpha}^{s}(x)=\{y\in X \mid d(f^{n}(x), f^{n}(y))\leq \alpha \text{, for each } n\in \mathbb{N}\}$.
\end{center}
\item the local unstable set $W_{\alpha}^{u}((x _{n}))$ is defined as
\begin{align*}
W_{\alpha}^{u}((x _{n}))= \lbrace y_{0} \in X  \mid & \text{ there exists a } (y_n) \in  X_{f}  \text{ such that } \\ 
& d(x_{-n},y_{-n}) \leq \alpha \text{, for each } n\in \mathbb{N}\rbrace.
\end{align*}
\item the stable set is defined as 
\begin{center}
$W^{s}(x)=\{y\in X \mid \lim\limits_{n\rightarrow \infty} d(f^{n}(x), f^{n}(y)) = 0\}$.
\end{center}
\item the unstable set is defined as
\begin{center}
$W^{u}((x _{n}))= \lbrace y_{0}\in X \mid \text{ there exists a } (y_n) \in  X_{f} \text{ such that } \lim\limits_{n\rightarrow \infty} d(x_{-n},y_{-n}) = 0\rbrace$.
\end{center}
\end{itemize}
	
\begin{remark}\label{R2.1}
Let $f:X\rightarrow X$ be a continuous surjection on a compact metric space $X$. 
\begin{enumerate}[leftmargin=16pt, align=left, labelwidth=\parindent, labelsep=4pt]
\item Then $f$ has the shadowing property if and only if for each $\epsilon >0$, there exists a $\delta>0$ such that for each $(x_{n})\in X^\mathbb{Z}$ satisfying $d(f(x_{n}),x_{n+1})<\delta$, for each $n\in \mathbb{Z}$, there exists a $(y_n)\in X_{f}$ such that $d (x_n , y_ n) < \epsilon$, for each $n\in \mathbb{Z}$ \cite[Theorem 2.3.7]{AH}.	
			
\item If $f$ has the shadowing property, then $f(\Omega(f))= \Omega(f)$ and $f|_{\Omega(f)}$ has the shadowing property \cite[Theorem 3.4.1 and Theorem 3.4.2]{AH}.
\end{enumerate}
\end{remark}
	
\section{Remarks on Bi-asymptotic Expansivity}
In \cite[Page 3]{LMV}, the authors have stated that if $f:X\rightarrow X$ is a bi-asymptotically expansive homeomorphism on a metric space $X$ (not necessarily compact), then $f$ is a weak bi-asymptotically expansive homeomorphism. The authors have further mentioned that they are not aware whether every weak bi-asymptotically expansive homeomorphism is bi-asymptotically expansive or not. In the next example, we answer this question in negative for homeomorphisms on non-compact metric spaces. Precisely, we establish the existence of a weak bi-asymptotically expansive homeomorphism on a non-compact metric space which is not bi-asymptotically expansive. 
In \cite[Proposition 11]{LMV}, the authors have also proved that every expansive homeomorphism on a compact metric space is bi-asymptotically expansive. Since the homeomorphism considered in the next example is also expansive, we get that \cite[Proposition 11]{LMV} does not hold for non-compact metric spaces. In other words, every expansive homeomorphism on a non-compact metric space need not be bi-asymptotically expansive.
	
\begin{Example}\label{E3.1}
Define $X_{0}=\{(m,e^{-m}) \mid m\in\mathbb{Z}\}$ and $X_{i}=\left\{\left(m,e^{-m+\frac{1}{i}}+\frac{1}{i}\right) \mid m\in \mathbb{Z}\right\}$, for each $i\geq 1$. Consider the space $X=\bigcup\limits_{i=0}^{\infty}X_{i}$, where $X$ is assumed to be equipped with the metric topology induced from the Euclidean metric of $\mathbb{R}^{2}$. Define a map $g:X\rightarrow X$ as follows:
		\[g(x) =\begin{cases}
\left(m+1,e^{-(m+1)+\frac{1}{i}}+\frac{1}{i}\right) & \text{ if } x = \left(m,e^{-m+\frac{1}{i}}+\frac{1}{i}\right), m\in \mathbb{Z}, i\geq 1 \\
\left(m+1,e^{-(m+1)}\right) & \text{ if } x  = (m,e^{-m}), m\in \mathbb{Z}
\end{cases}.\]
\par
Clearly, $X$ is a non-compact metric space, $g$ is a homeomorphism, $X_{i}$ are $g$-invariant sets, for each $i\in \mathbb{N}$ and $g^{-1}$ is a positively expansive homeomorphism with a positive expansivity constant $\mathfrak{c}=1$. Therefore $g$ is an expansive homeomorphism with the expansivity constant $\mathfrak{c}$. Since every expansive homeomorphism is weak bi-asymptotically expansive, we get that $g$ is a weak bi-asymptotically expansive homeomorphism also. 
Now, we claim that $g$ is not a bi-asymptotically expansive homeomorphism. Choose an $\epsilon > 0$, $k,p\in\mathbb{N}^{+}$ such that $\frac{1}{k}-\frac{1}{p}<\frac{\epsilon}{3}$ where $k<p$, $m = 1$ and an $M \in\mathbb{N}^{+}$ such that $\max \left\lbrace e^{-(1+M)+\frac{1}{k}}, e^{-(1+M)+\frac{1}{p}} \right\rbrace < \frac{\epsilon}{3}$. Clearly, for each $n\geq M$, we have
\begin{center}
$\max \left\{ d\left(g^{n}\left(\left(1,e^{-1+\frac{1}{k}}+\frac{1}{k}\right)\right),\left(1+n,\frac{1}{k}\right)\right),	
d\left(g^{n}\left(\left(1,e^{-1+\frac{1}{p}}+\frac{1}{p}\right)\right),\left(1+n,\frac{1}{p}\right)\right)\right\}$ \\
\hspace*{-3cm} $< \frac{\epsilon}{3}$.
\end{center}
Therefore we have
$d\left(g^{n}\left(\left(1,e^{-1+\frac{1}{k}}+\frac{1}{k}\right)\right),g^{n}\left(\left(1,e^{-1+\frac{1}{p}}+\frac{1}{p}\right)\right)\right) < \epsilon$, for each $n\geq M$.
In particular if we set $x=g^{M}\left(\left(1,e^{-1+\frac{1}{k}}+\frac{1}{k}\right)\right)$ and $y=g^{M}\left(\left(1,e^{-1+\frac{1}{p}}+\frac{1}{p}\right)\right)$, then we have $d(g^{n}(x),g^{n}(y)) < \epsilon$, for each $n\in \mathbb{N}$ but $\lim\limits_{n\rightarrow \infty}d(g^{n}(x),g^{n}(y)) = \left|\frac{1}{k}-\frac{1}{p}\right| >0$. Since $\epsilon > 0$ is chosen arbitrarily, we get that $g$ is not asymptotically expansive and hence $g$ is not bi-asymptotically expansive.
\end{Example}
	
\section{Bi-asymptotically $c$-expansive maps}
In this section, we first define the bi-asymptotically $c$-expansive maps and then study the behaviour of maps having such property. We further compare this notion with other existing variants of expansivity.
	
\begin{Definition}
Let $f:X\rightarrow X$ be a continuous surjective map. We say that $f$ is bi-asymptotically $c$-expansive with a bi-asymptotic $c$-expansivity constant $\mathfrak{c} > 0$ if 
\begin{enumerate}[leftmargin=16pt, align=left, labelwidth=\parindent, labelsep=4pt]
\item $(x_{n}), (y_{n})\in X_{f}$ satisfy $d(x_{n}, y_{n})\leq \mathfrak{c}$, for each $n\in \mathbb{N}$, then $\lim\limits_{n\rightarrow \infty} d(x_{n}, y_{n})=0$.
			
\item $(x_{n}), (y_{n})\in X_{f}$ satisfy $d(x_{-n}, y_{-n})\leq \mathfrak{c}$, for each $n\in \mathbb{N}$, then $\lim\limits_{n\rightarrow \infty}d(x_{-n}, y_{-n})= 0$.
\end{enumerate}
\label{D4.1}
\end{Definition}
	
\begin{remark}\label{R4.2}
Let $f:X\rightarrow X$ and $g:Y\rightarrow Y$ be continuous surjective maps on compact metric spaces $(X, d)$ and $(Y, \rho)$ respectively. Suppose that $X\times Y$ is equipped with the metric $D$ defined as $D((x_{1}, y_{1}),(x_{2}, y_{2})) = \max \lbrace d(x_{1}, x_{2}), \rho(y_{1}, y_{2}) \rbrace$, for each pair $(x_{1}, y_{1}),(x_{2}, y_{2})\in X\times Y$.
		
\begin{enumerate}[leftmargin=16pt, align=left, labelwidth=\parindent, labelsep=4pt]
\item If $f$ is bi-asymptotically $c$-expansive and $B$ is an $f$-invariant closed subset of $X$, then $f|_{B}$ is also bi-asymptotically $c$-expansive.
\item Then $f$ is bi-asymptotically $c$-expansive if and only if $f^{k}$ is bi-asymptotically $c$-expansive, for some $k \in \mathbb{N}^{+}$ if and only if $f^{k}$ is bi-asymptotically $c$-expansive, for each $k\in \mathbb{N}^{+}$.
\item If $f\times g: X\times Y\rightarrow X\times Y$ is defined as $(f\times g)(x,y) = (f(x), g(y))$, for each $(x, y)\in X\times Y$, then $f\times g$ is bi-asymptotically $c$-expansive if and only if $f$ and $g$ both are bi-asymptotically $c$-expansive.
\item If $h:X\rightarrow Y$ is a homeomorphism, then $f$ is bi-asymptotically $c$-expansive if and only if $h\circ f\circ h^{-1}$ is bi-asymptotically $c$-expansive. Therefore we get that bi-asymptotic $c$-expansivity is a dynamical property.
\end{enumerate}
\end{remark}
	
\begin{theorem}\label{T4.3}
Let $f:X\rightarrow X$ be a continuous surjective map on a compact metric space $X$. If $f$ is $c$-expansive, then $f$ is bi-asymptotically $c$-expansive. 
\end{theorem}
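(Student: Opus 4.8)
The plan is to argue by contradiction, exploiting the compactness of the inverse limit space $X_{f}$ together with the shift-invariance recorded in the preliminaries. Suppose $f$ is $c$-expansive with a $c$-expansivity constant $\mathfrak{c}>0$; I will show that $f$ is bi-asymptotically $c$-expansive with the same constant $\mathfrak{c}$. Since the two defining conditions of Definition \ref{D4.1} are mirror images of one another, one concerning the forward coordinates and the other the backward coordinates, I would prove the first condition in full and then obtain the second by the identical argument after replacing the shift $\sigma$ by $\sigma^{-1}$.

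To prove condition (1), take $(x_{n}),(y_{n})\in X_{f}$ with $d(x_{n},y_{n})\leq \mathfrak{c}$ for each $n\in \mathbb{N}$, and suppose for contradiction that $d(x_{n},y_{n})\not\to 0$ as $n\to\infty$. Then there exist an $\epsilon>0$ and a strictly increasing sequence $(n_{k})_{k\in\mathbb{N}}$ in $\mathbb{N}$ with $d(x_{n_{k}},y_{n_{k}})\geq \epsilon$ for every $k$. The idea is to translate these \enquote{bad} coordinates back to the origin by the shift: set $a^{(k)}=\sigma^{n_{k}}((x_{n}))$ and $b^{(k)}=\sigma^{n_{k}}((y_{n}))$, so that $a^{(k)},b^{(k)}\in X_{f}$ because $X_{f}$ is $\sigma$-invariant, and their $j$-th coordinates are $x_{j+n_{k}}$ and $y_{j+n_{k}}$ respectively.

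Now I would invoke compactness. Since $X_{f}$ is a closed subset of the compact metric space $(X^{\mathbb{Z}},\mathsf{d})$, it is itself compact, so after passing to a subsequence I may assume $a^{(k)}\to (a_{n})$ and $b^{(k)}\to (b_{n})$ in $X_{f}$; recall that convergence in the metric $\mathsf{d}$ is exactly coordinate-wise convergence. Fixing any $j\in\mathbb{Z}$, for all large $k$ we have $j+n_{k}\geq 0$, whence $d(a^{(k)}_{j},b^{(k)}_{j})=d(x_{j+n_{k}},y_{j+n_{k}})\leq \mathfrak{c}$; letting $k\to\infty$ and using continuity of $d$ yields $d(a_{j},b_{j})\leq \mathfrak{c}$ for every $j\in\mathbb{Z}$. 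On the other hand, the zeroth coordinates satisfy $d(a_{0},b_{0})=\lim_{k}d(x_{n_{k}},y_{n_{k}})\geq \epsilon>0$, so $(a_{n})\neq (b_{n})$. This contradicts the $c$-expansivity of $f$, which forces any two points of $X_{f}$ that remain within $\mathfrak{c}$ in every coordinate to coincide.

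I expect the only delicate point to be the compactness step: one must check that the shifted sequences genuinely lie in $X_{f}$ (guaranteed by $\sigma$-invariance), that passing to the limit keeps us inside $X_{f}$ (guaranteed by $X_{f}$ being closed), and that the coordinate-wise bound $d(a_{j},b_{j})\leq \mathfrak{c}$ survives the limit; everything else is bookkeeping. For condition (2) I would repeat the argument verbatim, this time choosing $n_{k}\to\infty$ with $d(x_{-n_{k}},y_{-n_{k}})\geq \epsilon$ and translating by $\sigma^{-n_{k}}$, so that the bad backward coordinate is brought to the origin; the same compactness and continuity reasoning then produces a pair of distinct points of $X_{f}$ that are $\mathfrak{c}$-close in every coordinate, again contradicting $c$-expansivity.
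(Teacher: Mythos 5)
Your proof is correct, but it takes a genuinely different route from the paper. The paper's proof is not self-contained: starting from a $c$-expansivity constant $\mathfrak{d}$, it picks $0<\mathfrak{c}<\mathfrak{d}$ and quotes two results from Aoki--Hiraide, namely \cite[Lemma 2.4.3]{AH} (for a $c$-expansive map, $W^{s}(z)=\bigcup_{n\geq 0}f^{-n}\left(W^{s}_{\mathfrak{c}}\left(f^{n}(z)\right)\right)$, which settles the forward condition, since $y_{0}\in W^{s}_{\mathfrak{c}}(x_{0})$ then forces $y_{0}\in W^{s}(x_{0})$) and \cite[Lemma 2.4.1]{AH} (a uniform estimate forcing the backward coordinates to converge), and these two citations do essentially all the work. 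You instead argue by contradiction directly: you shift the bad coordinates back to the origin via $\sigma^{n_k}$, use compactness of $X_{f}$ (closed in the compact space $X^{\mathbb{Z}}$) to extract coordinate-wise limits $(a_n),(b_n)\in X_{f}$ satisfying $d(a_j,b_j)\leq\mathfrak{c}$ for every $j\in\mathbb{Z}$ yet $d(a_0,b_0)\geq\epsilon$, contradicting $c$-expansivity; the backward condition follows by the mirror argument with $\sigma^{-n_k}$. Each step checks out: the shifted sequences stay in $X_{f}$ by $\sigma$-invariance, the limits stay in $X_{f}$ by closedness, and the closed inequality $\leq\mathfrak{c}$ survives the limit, so $c$-expansivity applies with no loss of constant. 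What your approach buys is self-containedness (it in effect reproves the relevant content of the Aoki--Hiraide lemmas by the standard shift-and-compactness technique) and a marginally sharper conclusion: the bi-asymptotic $c$-expansivity constant can be taken equal to the $c$-expansivity constant $\mathfrak{c}$ itself, whereas the paper's argument only delivers any constant strictly smaller than $\mathfrak{d}$. What the paper's approach buys is brevity, given that the reference \cite{AH} is already in use throughout.
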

\begin{proof}
Let $f:X\rightarrow X$ be a $c$-expansive map on a compact metric space $X$ with a $c$-expansivity constant $\mathfrak{d}$. Choose  $0<\mathfrak{c}< \mathfrak{d}$ and recall that $W^{s}(z)=\bigcup\limits_{n\geq 0}f^{-n}\left(W^{s}_{\mathfrak{c}}\left(f^{n}(z)\right)\right)$, for each $z\in X$ \cite[Lemma 2.4.3]{AH}.
Choose $(x_{n}),(y_{n})\in X_{f} $ such that $d(x_{n},y_{n})\leq\mathfrak{c}$, for each  $n\in \mathbb{N}$. Clearly $y_{0}\in W^{s}_{\mathfrak{c}}(x_{0})$ and thus we get that $y_{0}\in W^{s}(x_{0})$. Therefore $\lim\limits_{n\rightarrow\infty}d(f^{n}(x_{0}),f^{n}(y_{0}))= 0$.
Now we suppose that $d(x_{-n},y_{-n})\leq\mathfrak{c}$, for each $n\in \mathbb{N}$. 
Clearly $d(x_{-n},y_{-n})< \mathfrak{d}$, for each  $n\in \mathbb{N} $. 
Recall that for each $\epsilon > 0$, there exists an $N\in \mathbb{N}^{+}$ such that if $d(x_{-n}, y_{-n})\leq \mathfrak{d}$, for each $n\in\mathbb{N}$, then $d(x_{-n}, y_{-n})\leq \epsilon$, for each $n\geq N$ \cite[Lemma 2.4.1]{AH}. Therefore $\lim\limits_{n\rightarrow \infty} d(x_{-n},y_{-n})= 0$. Since $(x_{n})$ and $(y_{n})$ are chosen arbitrarily, we get that $f$ is bi-asymptotically $c$-expansive with the bi-asymptotic $c$-expansivity constant $\mathfrak{c}$.
\end{proof}
	
\begin{Corollary}\label{C4.4}
Let $f:X\rightarrow X$ be a continuous surjective map on a compact metric space $X$. If $f$ is positively expansive, then $f$ is bi-asymptotically $c$-expansive. 
\end{Corollary}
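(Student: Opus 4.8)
The plan is to deduce this from Theorem \ref{T4.3} by reducing positive expansivity to $c$-expansivity. Since Theorem \ref{T4.3} already establishes that every $c$-expansive continuous surjection on a compact metric space is bi-asymptotically $c$-expansive, it suffices to verify that a positively expansive $f$ is $c$-expansive, and in fact with the very same expansivity constant. This keeps the corollary genuinely a corollary rather than a repetition of the proof of Theorem \ref{T4.3}.

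First I would fix a positive expansivity constant $\mathfrak{c}>0$ for $f$ and take two points $(x_n),(y_n)\in X_f$ satisfying $d(x_n,y_n)\leq \mathfrak{c}$ for each $n\in\mathbb{Z}$; the goal is to prove $(x_n)=(y_n)$. The key observation is that for any fixed $k\in\mathbb{Z}$ the forward orbits of $x_k$ and $y_k$ can be read off from the sequences themselves: since $(x_n),(y_n)\in X_f$, one has $f^n(x_k)=x_{k+n}$ and $f^n(y_k)=y_{k+n}$ for every $n\in\mathbb{N}$. Consequently $d(f^n(x_k),f^n(y_k))=d(x_{k+n},y_{k+n})\leq\mathfrak{c}$ for each $n\in\mathbb{N}$, and positive expansivity applied to the pair $x_k,y_k$ forces $x_k=y_k$. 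As $k\in\mathbb{Z}$ is arbitrary, this gives $(x_n)=(y_n)$, so $f$ is $c$-expansive with constant $\mathfrak{c}$, and the conclusion then follows directly from Theorem \ref{T4.3}.

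The one point that requires care, and which I expect to be the only mild obstacle, is that $f$ is merely a continuous surjection and need not be injective; hence knowing $x_0=y_0$ does not by itself propagate equality to the negative coordinates through $f$. The shift trick above circumvents this precisely: rather than trying to push the equality $x_0=y_0$ backwards, I apply positive expansivity separately at each index $k$ (including negative $k$), using that the tail $(x_{k+n})_{n\geq 0}$ is exactly the forward orbit of $x_k$. This coordinate-by-coordinate use of positive expansivity is what recovers equality of the full bi-infinite sequences, and beyond it the argument is routine.
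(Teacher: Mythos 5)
Your proposal is correct and follows essentially the same route as the paper: both deduce the corollary from Theorem \ref{T4.3} via the implication \implication{positively expansive}{$c$-expansive}. The only difference is that the paper cites this implication from \cite[Page no. 57]{AH}, whereas you prove it directly (correctly, via the index-by-index application of positive expansivity to the tails $(x_{k+n})_{n\geq 0}$), which is a perfectly valid substitute.
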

\begin{proof}
Proof follows from the Theorem \ref{T4.3} and the fact that every positively expansive continuous surjective map on a compact metric space is $c$-expansive \cite[Page no. 57]{AH}.
\end{proof}
	
In the next two examples, we construct continuous surjective maps those are bi-asymptotically $c$-expansive but can not be $c$-expansive and hence can not be positively expansive also. This proves that the converse of Theorem \ref{T4.3} and Corollary \ref{C4.4} need not be true. 
	
\begin{Example}\label{E4.5}
Define a homeomorphism $f:[0,1]\rightarrow [0,1]$ by $f(x)=x^{2}$, for each $x\in [0,1]$. Recall that $f$ can not be $c$-expansive, positively expansive or expansive. Since $Fix(f) = \lbrace 0, 1\rbrace$, and $\lim\limits_{n\rightarrow \infty} f^{n}(x) = 0$, $\lim\limits_{n\rightarrow \infty} f^{-n}(x)=1$, for each $x\in (0, 1)$, we get that $f$ is bi-asymptotically $c$-expansive and bi-asymptotically expansive with  bi-asymptotic $c$-expansivity constant and  bi-asymptotic expansivity constant $0 < \mathfrak{c} < \frac{1}{2}$, respectively.
\end{Example}
	
\begin{Example}\label{E4.6}
Define $X_{0} = \bigcup\limits_{i\geq 1}\left\{ \frac{i-1}{i}, -\frac{i-1}{i},-1, 1\right\}$ and
$X_{m}=\left\{ -\frac{i}{i+1}+\frac{1}{2^{m}i(i+1)} \mid i\geq 1\right\}$, for each $m\geq 1$. 
Consider the space $X=\bigcup\limits_{m=0}^{\infty} X_{m}$, where $X$ is assumed to be equipped with the metric topology induced from the Euclidean metric of $\mathbb{R}$. 
Define a map $f:X\rightarrow X$ as follows:		
\[f(x) =\begin{cases}
x & \text{ if } x =1,-1\\
-\frac{i-2}{i-1} & \text{ if } x = -\frac{i-1}{i}, i\geq 2 \\
\frac{i}{i+1} & \text{ if } x = \frac{i-1}{i}, i\geq 1
\\-\frac{i-1}{i}+\frac{1}{2^{m}i(i-1)} & \text{ if } x= -\frac{i}{i+1}+\frac{1}{2^{m}i(i+1)},  i\geq 2, m\geq 1\\
0 & \text{ if } x  =-\frac{1}{2}+\frac{1}{2^{m+1}}, m\geq 1
\end{cases}.\]
Note that $f$ is a continuous surjective map on a compact metric space $X$, and for each $(x_{n})\in X_{f}\setminus \lbrace (\ldots, 1,1,1,\ldots), (\ldots, -1,-1,-1,\ldots) \rbrace$, we have $\lim\limits_{n\rightarrow \infty}x_{-n}=-1$  and $\lim\limits_{n\rightarrow \infty}x_{n}=1$. 
Therefore $f$ is bi-asymptotically $c$-expansive with bi-asymptotic $c$-expansivity constant $0 < \mathfrak{c} < \frac{1}{4}$. 
Now we claim that $f$ can not be $c$-expansive. Choose an $\epsilon > 0$ and a $k\in \mathbb{N}^{+}$ such that $\frac{1}{2^{k+1}} < \epsilon$. Note that if $(x_{n}),(y_{n}) \in X_{f}$ such that $x_{0}=-\frac{1}{2}, y_{0}=-\frac{1}{2}+\frac{1}{2^{k+1}}$, then $d(x_{n},y_{n}) < \epsilon$, for each $n\in \mathbb{Z}$. Since $(x_{n})\neq (y_{n})$ and $\epsilon$ is chosen arbitrarily, we get that $f$ can not be $c$-expansive.
\end{Example}
	
\begin{remark}\label{T4.7}
Let $f:X\rightarrow X$ be a homeomorphism of a compact metric space $X$. Since $(x_{n})\in X_{f}$ if and only if there exists an $x\in X$ such that $x_{n}= f^{n}(x)$, for each $n\in \mathbb{Z}$, we get that $f$ is a bi-asymptotically $c$-expansive homeomorphism with a bi-asymptotic $c$-expansivity constant $\mathfrak{c}$ if and only if $f$ is a bi-asymptotically expansive homeomorphism with the bi-asymptotic expansivity constant $\mathfrak{c}$.
\end{remark}
	
\begin{theorem}\label{T4.8}
Let $f: X\rightarrow X$ be a bi-asymptotically $c$-expansive homeomorphism on a compact metric space $X$. Then $\sigma : X_{f} \rightarrow  X_{f}$ is bi-asymptotically expansive.
\end{theorem}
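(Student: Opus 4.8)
The plan is to realise $\sigma$ as a topological conjugate of $f$ and then feed the hypothesis through the two remarks that immediately precede the statement. Since $f$ is a homeomorphism, every element of $X_{f}$ has the form $(f^{n}(x))_{n\in\mathbb{Z}}$ and is completely determined by its zeroth coordinate; this lets me define $\iota:X\rightarrow X_{f}$ by $\iota(x)=(f^{n}(x))_{n\in\mathbb{Z}}$. First I would verify that $\iota$ is a homeomorphism: it is continuous and injective, it is surjective precisely because $f^{-1}$ exists (so the negative coordinates $x_{-n}=f^{-n}(x_{0})$ are recovered from $x_{0}$), and a continuous bijection from the compact space $X$ onto the Hausdorff space $X_{f}$ is automatically a homeomorphism. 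A direct computation gives $\iota(f(x))=(f^{n+1}(x))_{n\in\mathbb{Z}}=\sigma(\iota(x))$, so $\iota\circ f=\sigma\circ\iota$, i.e. $\sigma=\iota\circ f\circ\iota^{-1}$; thus $\sigma$ on $X_{f}$ is conjugate to $f$ on $X$ through $\iota$.

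With the conjugacy in place the proof is two short steps. Because $f$ is bi-asymptotically $c$-expansive by hypothesis and $\iota:X\rightarrow X_{f}$ is a homeomorphism, Remark~\ref{R4.2}(4) (bi-asymptotic $c$-expansivity is a dynamical property) applied with $h=\iota$ shows that $\sigma=\iota\circ f\circ\iota^{-1}$ is bi-asymptotically $c$-expansive. Next I observe that $\sigma|_{X_{f}}$ is itself a homeomorphism of the compact metric space $X_{f}$, since the shift on $X^{\mathbb{Z}}$ is a homeomorphism and $X_{f}$ is a closed $\sigma$-invariant subset. Hence Remark~\ref{T4.7}, applied now to the map $\sigma$ rather than to $f$, upgrades bi-asymptotic $c$-expansivity of $\sigma$ to bi-asymptotic expansivity of $\sigma$. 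Combining the two steps yields that $\sigma:X_{f}\rightarrow X_{f}$ is bi-asymptotically expansive.

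I do not expect a genuine obstacle here; the one point that deserves care is bookkeeping about which map each remark is applied to. Remark~\ref{R4.2}(4) is used to transport the property from $f$ to its conjugate $\sigma$, whereas Remark~\ref{T4.7} is invoked for $\sigma$ directly, and this last invocation is legitimate only because $\sigma|_{X_{f}}$ is a homeomorphism of a compact metric space. The homeomorphism hypothesis on $f$ is exactly what makes $\iota$ surjective, and compactness of $X$ and $X_{f}$ is what turns the relevant continuous bijections into homeomorphisms, so both standing assumptions of the theorem are genuinely used.

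Should one wish to avoid conjugacy, the two clauses of bi-asymptotic expansivity of $\sigma$ can be checked by hand. If $\mathsf{d}(\sigma^{k}\xi,\sigma^{k}\eta)\le\mathfrak{c}'$ for all $k\ge 0$, the $m=0$ term of the series defining $\mathsf{d}$ gives $d(x_{k},y_{k})\le\mathfrak{c}'$ for all $k\ge 0$; taking $\mathfrak{c}'\le\mathfrak{c}$ puts $(x_{n}),(y_{n})$ into the hypothesis of the first clause of bi-asymptotic $c$-expansivity of $f$, so $d(x_{j},y_{j})\rightarrow 0$ as $j\rightarrow\infty$. Splitting the sum $\mathsf{d}(\sigma^{k}\xi,\sigma^{k}\eta)=\sum_{m}2^{-|m|}d(x_{m+k},y_{m+k})$ into the finitely many heavily weighted central terms (which tend to $0$ by the previous sentence) and a uniformly small tail (controlled by the diameter of $X$) then forces $\mathsf{d}(\sigma^{k}\xi,\sigma^{k}\eta)\rightarrow 0$; the argument for $\sigma^{-1}$ is identical using the second clause. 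This route is more computational, so I would prefer the conjugacy argument above.
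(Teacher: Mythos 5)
Your proposal is correct, but the route you prefer is not the paper's. The paper proves Theorem \ref{T4.8} by exactly the computation you sketch in your final paragraph: from $\mathsf{d}(\sigma^{i}((x_{n})),\sigma^{i}((y_{n})))\leq\mathfrak{c}$ for all $i\in\mathbb{N}$ it reads off $d(x_{i},y_{i})\leq\mathfrak{c}$ (the $n=0$ term of the series), invokes clause (1) of Definition \ref{D4.1} to get $d(x_{i},y_{i})\rightarrow 0$, and then splits $\mathsf{d}(\sigma^{i}((x_{n})),\sigma^{i}((y_{n})))$ into a central block of $2N_{1}+1$ terms plus two tails bounded via $\mathrm{diam}(X)/2^{N_{1}}$, with a symmetric argument for the backward direction; in particular the same constant $\mathfrak{c}$ survives as the bi-asymptotic expansivity constant of $\sigma$. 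Your conjugacy argument is also sound, and your bookkeeping is exactly right: $\iota(x)=(f^{n}(x))_{n\in\mathbb{Z}}$ is a homeomorphism precisely because $f$ is invertible and $X$ is compact, Remark \ref{R4.2}(4) transports bi-asymptotic $c$-expansivity from $f$ to $\sigma=\iota\circ f\circ\iota^{-1}$, and Remark \ref{T4.7} applies to $\sigma$ since $\sigma|_{X_{f}}$ is a homeomorphism of the compact metric space $X_{f}$. The trade-offs are worth recording. Your route is shorter and conceptual, but it leans on Remark \ref{R4.2}(4), which the paper states without proof and whose verification (uniform continuity of $\iota$ and $\iota^{-1}$) hides essentially the same epsilon-management as the direct computation; it also makes essential use of the hypothesis that $f$ is a homeomorphism, since otherwise $\iota$ fails to be surjective. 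The paper's computation, by contrast, never uses invertibility of $f$, so it in fact establishes the conclusion for any bi-asymptotically $c$-expansive continuous surjection (the conclusion still makes sense because $\sigma$ is a homeomorphism of $X_{f}$ in any case), and it yields an explicit constant. So your preferred argument buys brevity at the cost of generality and self-containedness, while the paper's direct estimate buys the converse.
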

\begin{proof}
Suppose that $f$ is a bi-asymptotically $c$-expansive map with a bi-asymptotic $c$-expansivity constant $\mathfrak{c}$ and $diam (X) = \sup\limits_{x, y\in X}d(x, y) = \alpha$. Choose $(x_{n}), (y_{n})\in X_{f}$ such that $\mathsf{d}(\sigma^{i}((x_{n})), \sigma^{i}((y_{n}))) \leq\mathfrak{c}$, for each $ i\in \mathbb{N}$, $\epsilon>0$ and an $N_{1}\in \mathbb{N}^{+}$ such that $\frac{\alpha}{2^{N_{1}-2}}<\epsilon$. 
Since $d(x_{i},y_{i})\leq \mathsf{d}(\sigma^{i}((x_{n})), \sigma^{i}((y_{n})))\leq\mathfrak{c}$, for each $i\in \mathbb{N}$, we get that $\lim\limits_{i\rightarrow \infty} d(x_{i},y_{i}) = 0$. Thus we can choose an $N_{2}\in \mathbb{N}^{+}$ such that $d(x_{i},y_{i})<\frac{\epsilon}{4N_{1}+2}$,  for each $i\geq N_{2}$ implying that $d(x_{i-N_{1}},y_{i-N_{1}})< \frac{\epsilon}{4N_{1}+2}$, for each $i\geq N = N_{1}+N_{2}$. Note that
\begin{align*}
\mathsf{d}(\sigma^{i}((x_{n})),\sigma^{i}((y_{n})))&=\displaystyle\sum_{n=-\infty}^{\infty}\frac{d(x_{i+n},y_{i+n})}{2^{|n|}}\\
&=\displaystyle\sum_{n=-\infty}^{-N_{1}-1}\frac{d(x_{i+n},y_{i+n})}{2^{|n|}}+\displaystyle\sum_{n=-N_{1}}^{N_{1}}\frac{d(x_{i+n},y_{i+n})}{2^{|n|}}+\displaystyle\sum_{n=N_{1}+1}^{\infty}\frac{d(x_{i+n},y_{i+n})}{2^{|n|}}\\
&< \left (\frac{\alpha}{2^{N_{1}}}+\frac{\epsilon}{2}+\frac{\alpha}{2^{N_{1}}} \right)<\epsilon \text{, for each } i>N.
\end{align*}
Hence $\lim\limits_{i\rightarrow \infty} \mathsf{d}(\sigma^{i}((x_{n})), \sigma^{i}((y_{n}))) = 0$. Similarly we can show that if $\mathsf{d}(\sigma^{-i}((x_{n})), \sigma^{-i}((y_{n})))\leq\mathfrak{c}$, for each $i\in \mathbb{N}$, then $\lim\limits_{i\rightarrow \infty} \mathsf{d}(\sigma^{-i}((x_{n})), \sigma^{-i}((y_{n})))= 0$.
Since $(x_{n})$ and $(y_{n})$ are chosen arbitrarily, we get that $\sigma$ is bi-asymptotically expansive with the bi-asymptotic expansivity constant $\mathfrak{c}$.
\end{proof}
	
\begin{Example}\label{E4.9}
Let g be an expansive homeomorphism with the shadowing property on uncountable compact metric space $(Y, d_0)$. Suppose that $g$ has infinite number of distinct periodic points $\{p_{k}\}_{k\in\mathbb{N}^{+}}$ having disjoint periodic orbits with prime period $\pi(p_{k})$, for each $k\in\mathbb{N}^{+}$.
We define $X = Y\cup E$, where $E$ is an infinite enumerable set. Assume that $Q = \bigcup\limits_{k\in\mathbb{N}^{+}}\{1, 2, \ldots, N-1\} \times \{k\} \times \{0, 1, 2, 3,\ldots, \pi(p_{k})-1\}$, and choose bijections $r : \mathbb{N}^{+} \rightarrow E$ and $s : Q \rightarrow \mathbb{N}^{+}$. Define a bijection $q : Q \rightarrow E$ by $q(i, k, j) = r(s(i, k, j))$, for each $(i, k, j) \in Q$. Note that any point $x\in E$ has the form $x = q(i, k, j)$, for some $(i, k, j)\in Q$.
		
Define a function $d : X \times X \rightarrow \mathbb{R}$ by
\begin{align*}
d(a, b) =\begin{cases}
0 & \text{ if } a = b\\
d_{0}(a, b) & \text{ if } a, b \in Y\\
\frac{1}{k} + d(g^{j}(p_{k}), b) & \text { if } a = q(i, k, j) \text{ and } b\in Y\\
\frac{1}{k} + d_0(a, g^{j}
(p_{k})) & \text { if } a \in  Y \text{ and }  b = q(i, k, j)\\
\frac{1}{k}&
\text{ if } a = q(i, k, j),b = q(l, k, j) \text{ and } i \neq l\\
\frac{1}{k}+\frac{1}{m} + d_0(g^{
j}(p_{k}), g^{r}(p_{m}))& \text{ if } a = q(i, k, j),b = q(i, m, r) \text{, } k\neq m \text{ or }  j \neq r.
\end{cases}
\end{align*}
and a map $f : X \rightarrow X$ by
\begin{align*}
f(x) =\begin{cases} 
g(x)& \text{ if } x \in Y\\
q(i, k,(j + 1)) \text{ mod } \pi(p_{k}) & \text{ if }  x = q(i, k, j)
\end{cases}.
\end{align*}
Recall that $f$ is an $N$-expansive homeomorphism with the shadowing property on a compact metric space $(X, d)$ such that $f$ is not $(N-1)$-expansive \cite{CC}. 
Since $d(f^{n}(q(1,k,0)),f^{n}(q(2,k,0)))= \frac{1}{k}$, for each $n\in \mathbb{N}$ and for each $k\in \mathbb{N}^{+}$ but $\lim\limits_{n\rightarrow \infty} d(f^{n}(q(1,k,0)),f^{n}(q(2,k,0))) \neq 0$, we conclude that $f$ can not be asymptotically expansive. From Remark \ref{T4.7}, we get that $f$ can not be bi-asymptotically $c$-expansive.
\end{Example}
	
%care@pnb.co.in
	
\section{Proof of Theorem \ref{T1.1}}
In this section, we first prove several lemmas and then use them to prove Theorem \ref{T1.1}.
	
\begin{Lemma}\label{L5.1}
Let $f:X\rightarrow X$ be a bi-asymptotically $c$-expansive continuous surjective map on a compact metric space $X$. If $f$ has the shadowing property, then $CR(f)=\Omega(f)=\overline{Per(f)}$.
	\end{Lemma}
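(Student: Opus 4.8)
The plan is to establish the two equalities by proving the cyclic chain of inclusions
\[
\overline{Per(f)} \subseteq \Omega(f) \subseteq CR(f) \subseteq \overline{Per(f)}.
\]
The first two inclusions hold for any continuous map on a compact metric space and use neither shadowing nor expansivity; the entire weight of the hypotheses falls on the third inclusion $CR(f) \subseteq \overline{Per(f)}$, which I expect to be the main obstacle.

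For the first inclusion, every periodic point is trivially non-wandering, and since $\Omega(f)$ is closed this gives $\overline{Per(f)} \subseteq \Omega(f)$. For $\Omega(f) \subseteq CR(f)$ I would fix $x \in \Omega(f)$ and $\alpha > 0$, use uniform continuity of $f$ on the compact space $X$ to choose $\beta < \alpha/2$ with $d(a,b) < \beta \Rightarrow d(f(a),f(b)) < \alpha/2$, and apply the non-wandering condition to $U = B(x,\beta)$ to obtain $n \geq 1$ and $z \in U$ with $f^{n}(z) \in U$. Then the finite chain $x, f(z), f^{2}(z), \ldots, f^{n-1}(z), x$ (and the one-step chain $x, x$ in the degenerate case $n=1$, which is controlled precisely by the choice of $\beta$) is an $\alpha$-chain from $x$ to itself, so $x \in CR(f)$.

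The substance is $CR(f) \subseteq \overline{Per(f)}$. Fix $x \in CR(f)$ and $\epsilon > 0$, and set $\epsilon_{0} = \min\{\epsilon, \mathfrak{c}\}/2$, where $\mathfrak{c}$ is the bi-asymptotic $c$-expansivity constant. Let $\delta > 0$ be the shadowing constant for $\epsilon_{0}$ furnished by Remark \ref{R2.1}$(1)$. Since $x$ is chain recurrent there is a finite chain $x = a_{0}, a_{1}, \ldots, a_{m} = x$ with $d(f(a_{i}), a_{i+1}) < \delta$; because it closes up at $x$, its $m$-periodic extension $\xi = (\xi_{n})_{n \in \mathbb{Z}}$ with $\xi_{0} = x$ is a two-sided $\delta$-pseudo orbit. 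By Remark \ref{R2.1}$(1)$ there is $(y_{n}) \in X_{f}$ with $d(\xi_{n}, y_{n}) < \epsilon_{0}$ for each $n \in \mathbb{Z}$. The key comparison is with the shifted orbit $\sigma^{m}((y_{n})) = (y_{n+m}) \in X_{f}$: since $\xi$ is $m$-periodic, $(y_{n+m})$ also $\epsilon_{0}$-shadows $\xi$, whence $d(y_{n}, y_{n+m}) < 2\epsilon_{0} \leq \mathfrak{c}$ for every $n$. Applying condition $(1)$ of bi-asymptotic $c$-expansivity to the pair $(y_{n}), (y_{n+m}) \in X_{f}$ then yields $\lim\limits_{n \to \infty} d(y_{n}, y_{n+m}) = 0$.

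It remains to convert this asymptotic periodicity into a genuine periodic point near $x$, which is the delicate step. Recalling that $y_{n} = f^{n}(y_{0})$, I would restrict attention to the indices that are multiples of $m$: since $\xi_{km} = x$, every $y_{km}$ lies in $B(x, \epsilon_{0})$, so by compactness some subsequence $y_{k_{i}m} \to p$ with $d(p, x) \leq \epsilon_{0} < \epsilon$. As $k_{i}m \to \infty$ we have $p \in \omega(y_{0})$, and the relation $d(y_{n}, y_{n+m}) \to 0$ gives $f^{m}(p) = \lim\limits_{i} y_{(k_{i}+1)m} = \lim\limits_{i} y_{k_{i}m} = p$, so $p$ is a periodic point within $\epsilon$ of $x$; hence $x \in \overline{Per(f)}$ and the cycle closes. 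The main difficulty, and the one place where both shadowing and the bi-asymptotic hypothesis are genuinely needed, is exactly this passage from an orbit that is only asymptotically $m$-periodic to an honest $f^{m}$-fixed point, which is why I would isolate the indices $\equiv 0 \pmod{m}$ so that the limit point is simultaneously periodic and close to $x$.
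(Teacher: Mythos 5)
Your proof is correct, but it follows a genuinely different route from the paper's. The paper disposes of this lemma in two lines: it quotes \cite[Corollary 29]{LMV}, which states that $CR(f)=\Omega(f)=\overline{Per(f)}$ for every asymptotically expansive continuous map with the shadowing property on a compact metric space, and then observes that a bi-asymptotically $c$-expansive continuous surjection is asymptotically expansive with the same constant: given $x,y\in X$ with $d(f^{n}(x),f^{n}(y))\leq\mathfrak{c}$ for all $n\in\mathbb{N}$, surjectivity provides full orbits $(x_n),(y_n)\in X_f$ through $x$ and $y$, and condition (1) of Definition \ref{D4.1}, which only constrains the non-negative indices, applies. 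What you have done instead is reprove the substance of that cited result from scratch, entirely inside the paper's own toolkit: the two soft inclusions $\overline{Per(f)}\subseteq\Omega(f)\subseteq CR(f)$, and then the closing step $CR(f)\subseteq\overline{Per(f)}$ via a periodic two-sided pseudo-orbit, the inverse-limit shadowing of Remark \ref{R2.1}(1), comparison of the shadowing orbit $(y_n)$ with its shift $(y_{n+m})$ to get $d(y_n,y_{n+m})\leq\mathfrak{c}$ for all $n$, condition (1) of bi-asymptotic $c$-expansivity to force $d(y_n,y_{n+m})\to 0$, and finally extraction of an $f^{\mathsf{m}}$-fixed point near $x$ from the indices divisible by $m$. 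Every step checks out (the only cosmetic issues are the $\leq$ versus $<$ conventions for chains and pseudo-orbits, fixed by running the argument with $\delta/2$, and the remark that $p\in\omega(y_0)$, which you never actually need). The trade-off: the paper's reduction is shorter and delegates the closing-lemma machinery to \cite{LMV}, while your argument is self-contained, makes the mechanism explicit, and incidentally demonstrates that only the forward condition (1) of Definition \ref{D4.1} is used here --- a fact that is also implicit in the paper's reduction, since asymptotic expansivity is a purely forward notion.
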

\begin{proof}
Recall that if $f$ is an asymptotically expansive continuous map on a compact metric space $X$ with the shadowing property, then $CR(f)=\Omega(f)=\overline{Per(f)}$ \cite[Corollary 29]{LMV}. Now the proof follows from the fact that every bi-asymptotically $c$-expansive map with a bi-asymptotic $c$-expansivity constant $\mathfrak{c}$ is also asymptotically expansive with the asymptotic expansivity constant $\mathfrak{c}$.
\end{proof}
	
\begin{Lemma}{\label{L5.2}}
Let $f:X\rightarrow X$ be a bi-asymptotically $c$-expansive continuous surjective map on a compact metric space $X$. If $f$ has the shadowing property, then there exists a $\delta >0$ such that if $p,y\in X$ with $d(p,y)<\delta$ and $y_{0}=y, p_{0}=p$, then $W^{u}((p_{n}))\cap W^{s}(y)\neq  \emptyset$ and $W^{u}((y_{n}))\cap W^{s}(p)\neq \emptyset$, for each $(p_{n}),(y_{n})\in X_{f}$.
\end{Lemma}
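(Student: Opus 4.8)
The plan is to produce the required intersection point by shadowing a single bi-infinite pseudo-orbit that follows the backward orbit of $p$ in negative time and the forward orbit of $y$ in non-negative time, and then to read off the two asymptotic conclusions from the two clauses of Definition \ref{D4.1}. Let $\mathfrak{c}$ be a bi-asymptotic $c$-expansivity constant for $f$. First I would apply the shadowing property in the two-sided form of Remark \ref{R2.1}(1) with $\epsilon = \mathfrak{c}$ to obtain a $\delta > 0$ such that every $(z_n) \in X^{\mathbb{Z}}$ with $d(f(z_n), z_{n+1}) < \delta$ for all $n \in \mathbb{Z}$ admits a $(w_n) \in X_f$ with $d(z_n, w_n) < \mathfrak{c}$ for all $n$; this $\delta$ is the constant asserted by the lemma.

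Given $p, y$ with $d(p,y) < \delta$ and arbitrary $(p_n), (y_n) \in X_f$ with $p_0 = p$ and $y_0 = y$, I would set $z_n = p_n$ for $n < 0$ and $z_n = y_n$ for $n \geq 0$. Because $(p_n), (y_n) \in X_f$, the consecutive gap $d(f(z_n), z_{n+1})$ vanishes at every index except $n = -1$, where it equals $d(f(p_{-1}), y_0) = d(p, y) < \delta$; thus $(z_n)$ is a $\delta$-pseudo orbit and shadowing furnishes $(w_n) \in X_f$ with $d(z_n, w_n) < \mathfrak{c}$ for all $n \in \mathbb{Z}$. I claim $w_0 \in W^u((p_n)) \cap W^s(y)$.

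For the stable direction, the definition of $(z_n)$ gives $d(y_n, w_n) < \mathfrak{c}$ for every $n \in \mathbb{N}$, so clause (1) of Definition \ref{D4.1} applied to $(y_n), (w_n) \in X_f$ yields $\lim_{n\to\infty} d(y_n, w_n) = 0$; since $y_n = f^n(y)$ and $w_n = f^n(w_0)$ for $n \geq 0$, this says precisely $w_0 \in W^s(y)$. For the unstable direction I would take the witnessing sequence to be $(w_n)$ itself, whose zeroth coordinate is the required $w_0$. Here the shadowing estimate only delivers $d(p_{-n}, w_{-n}) < \mathfrak{c}$ for $n \geq 1$, so I cannot feed $(p_n), (w_n)$ into clause (2) directly, since that clause also demands the bound at $n = 0$. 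The clean remedy, and the one delicate point of the argument, is to pass to the $\sigma$-shifted pairs $\sigma^{-1}((p_n)), \sigma^{-1}((w_n)) \in X_f$: for these the inequality $d(p_{-1-n}, w_{-1-n}) < \mathfrak{c}$ holds for all $n \in \mathbb{N}$, so clause (2) gives $\lim_{n\to\infty} d(p_{-1-n}, w_{-1-n}) = 0$, which is the same limit as $\lim_{n\to\infty} d(p_{-n}, w_{-n}) = 0$. Hence $w_0 \in W^u((p_n))$.

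Finally, the second intersection $W^u((y_n)) \cap W^s(p)$ follows from the mirror-image construction, concatenating the backward orbit of $y$ with the forward orbit of $p$, whose only pseudo-orbit gap is again $d(y, p) < \delta$ at $n = -1$; the same two clauses then place the zeroth coordinate of the new shadow in the required intersection. I expect the only genuine obstacle to be the off-by-one at $n = 0$ in clause (2), handled above by the shift; trying instead to bound $d(p, w_0)$ directly would only give $d(p, w_0) < \delta + \mathfrak{c}$, which need not be $\leq \mathfrak{c}$, so the shift is essential. Everything else is a routine matching of the shadowing bound against the one-sided hypotheses of bi-asymptotic $c$-expansivity.
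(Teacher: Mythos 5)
Your proposal is correct and follows essentially the same approach as the paper: both concatenate the backward orbit of $p$ with the forward orbit of $y$ into a single $\delta$-pseudo orbit, shadow it via the two-sided form of Remark \ref{R2.1}(1), and feed the resulting element of $X_f$ into the two clauses of Definition \ref{D4.1}. The only divergence is the fix for the missing bound at index $0$ in the unstable direction: you shift both sequences by $\sigma^{-1}$ and apply clause (2) to the shifted pair, whereas the paper instead chooses $\epsilon<\mathfrak{c}$ by uniform continuity of $f$ so that $d(p_{-1},z_{-1})<\epsilon$ forces $d(p_{0},z_{0})<\mathfrak{c}$; both patches are valid and yield the same limit.
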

\begin{proof}
Let $f$ be bi-asymptotically $c$-expansive with a bi-asymptotic $c$-expansivity constant $\mathfrak{c}$.
Choose $0 < \epsilon < \mathfrak{c}$ such that if $u,v\in X$ satisfy $d(u,v) < \epsilon$, then  $d(f(u),f(v)) < \mathfrak{c}$. For this $\epsilon$, choose a $\delta>0$ by Remark \ref{R2.1}(1). Choose $p,y\in X$ such that $d(p, y) < \delta$, $p_{0}=p$, $y_{0}=y$ and $(p_{n}), (y_{n})\in X_{f}$. We define a sequence $(x_{n})\in X^{\mathbb{Z}}$ as follows:
\begin{center}
$x_{n}=\begin{cases}
p_{n}\quad &\text{if}~ n<0\\
f^{n}(y)\quad & \text{if} ~n\geq 0
\end{cases}$.
\end{center}
From Remark \ref{R2.1}(1), we can choose a sequence $(z_{n})\in X_{f}$ with $z_{0}=z$ such that $d(x_{n},z_{n})< \epsilon$, for each $n\in \mathbb{Z}$. Since $z_{n}=f^{n}(z_{0})$, for each $n\in \mathbb{N}$, we get that $d(p_{-n},z_{-n})< \mathfrak{c}$, for each $n\in \mathbb{N}$ and $d(f^{n}(y),f^{n}(z_{0}))<\mathfrak{c}$, for each $n\in \mathbb{N}$.
Therefore $\lim\limits_{n\rightarrow \infty} d(p_{-n},z_{-n}) = 0$ and $\lim\limits_{n\rightarrow \infty} d(f^{n}(y),f^{n}(z_{0})) = 0$ implying that $z_{0}\in W^{u}((p_{n}))$ and $z_{0}\in W^{s}(y)$ respectively. Thus we get that $W^{u}((p_{n}))\cap W^{s}(y)\neq \emptyset$.
Similarly we can prove that $W^{u}((y_{n}))\cap W^{s}(p)\neq \emptyset$.
\end{proof}

%Now, we prepare some lemmas which need to prove Theorem \ref{T1.1}. Hereafter $f:X\rightarrow X$ is a continuous surjection on a compact metric space $X$. It is bi-asymptotically $c$-expansive and admits shadowing property.
%\par
	
%We define a relation $\sim$ on $CR(f)$ 
\begin{remark}\label{R5.3}
Let $f:X\rightarrow X$ be a continuous surjective map. Given $x,y\in CR(f)$, we say that $x\sim y$ if for each $\alpha >0$, there exist finite $\alpha$-chains from $x$ to $y$ and $y$ to $x$. Recall that ``$\sim$" is an equivalence relation on $CR(f)$. We write $CR(f)$ into disjoint equivalence classes $\lbrace B_{\lambda}\rbrace_{\lambda\in \Lambda}$ (called basic sets) such that each $B_{\lambda}$ is a closed subset of $X$. Moreover if $f$ has the shadowing property, then $f|_{B_{\lambda}}$ is topologically transitive with the shadowing property, for each $\lambda\in \Lambda$ \cite{AH}.
\end{remark}
	
\begin{remark}\label{R5.4}
From Lemma \ref{L5.1} and Remark \ref{R5.3}, we get that if $f:X\rightarrow X$ is a continuous surjective map on a compact metric space $X$ with the shadowing property, then $\Omega(f)=\bigcup\limits_{\lambda\in \Lambda}B_{\lambda}$, where $B_{\lambda}$ is a closed and $f$-invariant subset of $X$, for each $\lambda\in \Lambda$.
\end{remark}
	
\begin{Lemma}\label{L5.5}
Let $f:X\rightarrow X$ be a bi-asymptotically $c$-expansive continuous surjective map on a compact metric space $X$. If $f$ has the shadowing property, then $B_{\lambda}$ is open in $\Omega(f)$, for each $\lambda\in \Lambda$ and hence $\Omega(f)=\bigcup\limits_{i=1}^{k}B_{i}$, for some $k\in \mathbb{N}^{+}$.
\end{Lemma}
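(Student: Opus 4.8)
The plan is to prove the topological statement that each $B_\lambda$ is open in $\Omega(f)$ by showing that sufficiently close non-wandering points are chain equivalent, and then to deduce finiteness from compactness. Concretely, I would first fix the constant $\delta>0$ produced by Lemma \ref{L5.2} and establish the key claim: if $x,y\in\Omega(f)$ satisfy $d(x,y)<\delta$, then $x\sim y$ in the sense of Remark \ref{R5.3}. Granting this claim, for any $x\in B_\lambda$ the set $\{y\in\Omega(f)\mid d(x,y)<\delta\}$ is contained in the equivalence class $B_\lambda$, so $B_\lambda$ is open in $\Omega(f)$.

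To prove the claim I would produce, for every $\alpha>0$, finite $\alpha$-chains from $x$ to $y$ and from $y$ to $x$. Using $f(\Omega(f))=\Omega(f)$ (Remark \ref{R2.1}(2)) together with the $f$-invariance of the basic sets (Remark \ref{R5.4}), I would choose full orbits $(x_n),(y_n)\in X_f$ lying entirely in the relevant basic sets. Lemma \ref{L5.2} then supplies points $z\in W^u((x_n))\cap W^s(y)$ and $z'\in W^u((y_n))\cap W^s(x)$; by symmetry it suffices to build the chain from $x$ to $y$ using $z$, the reverse direction being handled analogously with $z'$. The orbit $(z_n)$ of $z$ is backward asymptotic to $(x_n)$ while its forward orbit is asymptotic to that of $y$, so $z$ behaves like a heteroclinic connection from the orbit of $x$ to the orbit of $y$. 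Given $\alpha$, I would pick $\beta\le\alpha$ with $d(u,v)<\beta\Rightarrow d(f(u),f(v))<\alpha$, choose $N$ with $d(z_{-N},x_{-N})<\beta$ and $M$ with $d(f^M(z),f^M(y))<\beta$; following the exact orbit segment $z_{-N+1},\dots,z_0=z,f(z),\dots,f^{M-1}(z)$ and closing the two ends with the single $\alpha$-jumps permitted by $\beta$ yields an $\alpha$-chain from $x_{-N}$ to $f^M(y)$.

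The main obstacle is exactly this matching of endpoints: the stable and unstable asymptotics only connect the backward orbit of $x$ to the forward orbit of $y$, not the base points $x$ and $y$ themselves. I would resolve it by observing that $x_{-N}$ and $x$ lie in the same basic set, as do $f^M(y)$ and $y$, since each basic set is $f$-invariant by Remark \ref{R5.4}; hence there are $\alpha$-chains from $x$ to $x_{-N}$ and from $f^M(y)$ to $y$, and concatenating these with the chain above gives an $\alpha$-chain from $x$ to $y$. As $\alpha>0$ is arbitrary and the reverse chain is obtained from $z'$, we conclude $x\sim y$, and therefore $B_\lambda$ is open in $\Omega(f)$.

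Finally, I would deduce the decomposition into finitely many basic sets. The family $\{B_\lambda\}_{\lambda\in\Lambda}$ partitions $\Omega(f)$ into sets that are closed by Remark \ref{R5.3} and, by the above, open in $\Omega(f)$. Since $\Omega(f)$ is a closed, hence compact, subset of $X$, the open cover $\{B_\lambda\}_{\lambda\in\Lambda}$ admits a finite subcover; because the $B_\lambda$ are pairwise disjoint and nonempty, this forces $\Lambda$ to be finite. Relabelling then gives $\Omega(f)=\bigcup_{i=1}^{k}B_i$ for some $k\in\mathbb{N}^{+}$, as required.
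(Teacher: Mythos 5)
Your proof is correct, but it resolves the crucial difficulty differently from the paper. Both arguments use Lemma \ref{L5.2} to produce heteroclinic points and then follow exact orbit segments, closing the small gaps at either end via uniform continuity; and both get finiteness from compactness of $\Omega(f)$ exactly as you do. The difference is in how the chain endpoints are matched to the base points. The paper works only with \emph{periodic} points: for $p\in U_{\delta}(B_{\lambda})\cap Per(f)$ near $y\in B_{\lambda}$, the periodicity of $p$ makes its backward and forward orbits return exactly to $p$ (so the chains can start and end at $p$ itself), while the other end is closed at an $\omega$-limit point $q\in\omega(y)\subseteq B_{\lambda}$; this shows $U_{\delta}(B_{\lambda})\cap Per(f)\subseteq B_{\lambda}$, and only then does the density $\Omega(f)=\overline{Per(f)}$ from Lemma \ref{L5.1} upgrade this to $U_{\delta}(B_{\lambda})\subseteq B_{\lambda}$. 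You instead prove directly that any two $\delta$-close points $x,y\in\Omega(f)$ satisfy $x\sim y$, closing the endpoint gap by noting that $x\sim x_{-N}$ and $f^{M}(y)\sim y$ automatically, since the basic sets are $f$-invariant equivalence classes (Remark \ref{R5.4}), so the connecting $\alpha$-chains are supplied by the definition of $\sim$; this is legitimate and not circular, because Remark \ref{R5.4} is established before the lemma. Your route is shorter and avoids periodic points and the density step (though it still rests on Lemma \ref{L5.1} implicitly, via $CR(f)=\Omega(f)$ and the invariance in Remark \ref{R5.4}); the paper's route is more self-contained in its chain constructions and follows the classical Aoki--Hiraide/Morales template, which it then reuses almost verbatim in the proof of Lemma \ref{L5.6}.
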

\begin{proof}
From Remark \ref{R2.1}(2) and Remark \ref{R4.2}(1), we get that $f|_{\Omega(f)}$ is also bi-asymptotically $c$-expansive continuous surjective map on a compact metric space $\Omega(f)$ with the shadowing property. 
Choose a $\delta>0$ by Lemma \ref{L5.2} corresponding to $f|_{\Omega(f)}$ and fix a $\lambda\in \Lambda$. We define $U_{\delta}(B_{\lambda})=\{ \mathsf{w}\in\Omega(f)\mid d(\mathsf{w},B_{\lambda})<\delta\}$. We first claim that $(U_{\delta}(B_{\lambda})\cap Per(f))\subseteq B_{\lambda}$. Choose a $p\in (U_{\delta}(B_{\lambda})\cap Per(f))$ such that $f^{m}(p)=p$, for some $m\in \mathbb{N}^{+}$, $y\in B_{\lambda}$ such that $d(p,y)<\delta$, $(y_{n})\in \Omega_{f}$ with $y_{0}=y$ and an $m$-periodic point $(p_{n})\in \Omega_{f}$ with $p_{0}=p$. From Lemma \ref{L5.2}, we get that $W^{u}((p_{n}))\cap W^{s}(y)\neq \emptyset$ and $W^{u}((y_{n}))\cap W^{s}(p)\neq \emptyset$. Since   $B_{\lambda}$ is $f$-invariant and $y\in B_{\lambda}$, we have that $y_{n}\in B_{\lambda}$, for each $n\in \mathbb{Z}$.
Now choose an $\epsilon > 0$, $q\in \omega(y)\subseteq B_{\lambda}$, $z^{1}\in W^{u}((p_{n}))\cap W^{s}(y)$ and a $z^{2}\in W^{u}((y_{n}))\cap W^{s}(p)$. Therefore there exist $(z^{1}_{n}), (z^{2}_{n})\in \Omega_{f}$ with $z^{1}_{0}=z^{1}$, $z^{2}_{0}=z^{2}$ such that $\lim\limits_{n \rightarrow\infty}d(p_{-n},z^{1}_{-n})=0$, $\lim\limits_{n \rightarrow\infty}d(f^{n}(y),f^{n}(z^{1}))=0$, $\lim\limits_{n \rightarrow\infty}d(y_{-n},z^{2}_{-n})=0$ and $\lim\limits_{n \rightarrow\infty}d(f^{n}(p),f^{n}(z^{2}))=0$. Choose an $N\in \mathbb{N}^{+}$ such that
\begin{center}
$\max \lbrace d(f(p),z^{1}_{-N+1}), d(f^{N}(y),f^{N}(z^{1})), d(y_{-N+1},z^{2}_{-N+1}), d(p,f^{N}(z^{2}))\rbrace <\epsilon$. 
\end{center} 
From the definition of $B_{\lambda}$, we can choose an $\epsilon$-chain, say $\beta$, from $q$ to $y_{-N}$. 
Since $q\in \omega(x)$, we can have $\epsilon$-chains from $p$ to $q$ and $q$ to $p$ given by
\begin{center}
$p, z^{1}_{-N+1},z^{1}_{-N+2},\ldots,z^{1}_{-1},z^{1},f^{1}(z),\ldots,f^{N-1}(z^{1}),f^{N}(y),\ldots, q$
\end{center}
and
\begin{center}
$\beta, y_{-N},z^{2}_{-N+1},\ldots, z^{2}_{-1},z^{2},f(z^{2}),\ldots, f^{N-1}(z^{2}),p$, respectively.
\end{center}
Since $\epsilon$ is chosen arbitrarily, we get that $q\sim p$ implying that $p\in B_{\lambda}$. From Lemma \ref{L5.1}, we further obtain that $U_{\delta}(B_{\lambda})= (U_{\delta}(B_{\lambda})\cap\overline{Per(f)}) \subseteq (\overline{U_{\delta}(B_{\lambda})\cap Per(f)})\subseteq B_{\lambda}$.
Hence $B_{\lambda}$ is open in $\Omega(f)$. Since $\lambda$ is chosen arbitrarily and $\Omega(f)$ is compact, we conclude that $\Omega(f)=\bigcup\limits_{i=1}^{k}B_{i}$, for some $k\in \mathbb{N}^{+}$.
\end{proof}
	
\begin{Lemma}\label{L5.6}
Let $f:X\rightarrow X$ be a bi-asymptotically $c$-expansive continuous surjective map on a compact metric space $X$. Further assume that  $\Omega(f)=\bigcup\limits_{i=1}^{k}B_{i}$ where $B_{i}$'s are basic sets, for all $1\leq i\leq k$ and for some $k\in \mathbb{N}^{+}$. If $f$ has the shadowing property and $B\in \lbrace B_{i}|1\leq i\leq k\rbrace$, then for each $p\in Per(f)\cap B$ and $C_{p}=\overline{W^{s}(p)\cap B}$ the following holds: 
\begin{enumerate}
\item $f(C_{p})= C_{f(p)}$ and $C_{p}$ is open in $B$.
\item If $q\in C_{p}\cap \text{Per}(f)$, then $C_{p}=C_{q}$.
\item If $q\in Per(f)\cap B$ such that $C_{p}\cap C_{q}\neq \emptyset$, then $C_{p}=C_{q}$.
\item There exists an $\mathsf{m}\in \mathbb{N}^{+}$ such that $B=\bigcup\limits_{i=0}^{\mathsf{m}-1}C_{f^{i}(p)}$, $f(C_{f^{i}(p)})=C_{f^{i+1}(p)}$, for each $0\leq i\leq \mathsf{m}-1$ and $f^{\mathsf{m}}|_{C_{f^{i}(p)}}:{C_{f^{i}(p)}}\rightarrow {C_{f^{i}(p)}}$ is topologically mixing, for each $0\leq i\leq \mathsf{m}-1$.
\end{enumerate}
\end{Lemma}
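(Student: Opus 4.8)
The plan is to establish, in order, that each $C_p$ is clopen in $B$ (part (1)), that the distinct members of $\{C_p\mid p\in Per(f)\cap B\}$ form a finite partition of $B$ (parts (2)--(3)), and finally that $f$ permutes these pieces cyclically with a mixing power (part (4)). Throughout I would work with $f|_B$, which by Remark \ref{R4.2}(1), Remark \ref{R5.3} and Lemma \ref{L5.1} is again a bi-asymptotically $c$-expansive continuous surjection on the compact space $B$, is topologically transitive, has the shadowing property, and satisfies $B=\overline{Per(f)\cap B}$.

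For part (1), the identity $f(C_p)=C_{f(p)}$ is the routine half: since $f(B)=B$ and $f|_B$ is onto, one checks $f(W^s(p)\cap B)=W^s(f(p))\cap B$ directly from the definitions (for $\supseteq$, take a preimage $y\in B$ of a given point and use $d(f^{n}(f(p)),f^{n}(f(y)))=d(f^{n+1}(p),f^{n+1}(y))$), and then $f(\overline{A})=\overline{f(A)}$ on the compact space $B$ gives the claim. The substance is the openness of $C_p$ in $B$, and I would first prove the localization: with $\delta>0$ the constant of Lemma \ref{L5.2} for $f|_B$, every $q\in Per(f)\cap B$ with $d(q,C_p)<\delta$ already lies in $C_p$. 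To see this, pick $w'\in W^s(p)\cap B$ with $d(q,w')<\delta$ and apply Lemma \ref{L5.2} (with $q$ in the role of $p$ and $w'$ in the role of $y$) to get $z\in W^u((q_n))\cap W^s(w')$, where $(q_n)$ is the periodic backward orbit of $q$; since $W^s(w')\subseteq W^s(p)$ we get $z\in W^u((q_n))\cap W^s(p)$. Letting $(z_n)$ be the backward orbit realizing $z\in W^u((q_n))$ and taking $n$ through multiples of $\mathrm{lcm}(\pi_p,\pi_q)$ (the periods of $p,q$), periodicity forces $z_{-n}\in W^s(p)\cap B$ with $d(z_{-n},q)\to 0$, so $q\in\overline{W^s(p)\cap B}=C_p$. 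Combined with the density of $Per(f)\cap B$ and the closedness of $C_p$, this gives $B(w,\delta/2)\cap B\subseteq C_p$ for each $w\in C_p$, so $C_p$ is open; being a closure it is closed, hence clopen.

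The engine for parts (2)--(3) is a splicing lemma: for periodic $p,q\in B$, if $p\in C_q$ then $C_p\subseteq C_q$. I would prove $W^s(p)\cap B\subseteq C_q$: given $u\in W^s(p)\cap B$ and $a_k\in W^s(q)\cap B$ with $a_k\to p$ (available since $p\in C_q$), form the $\eta$-pseudo orbit that follows $u$ up to a time $N$ divisible by $\mathrm{lcm}(\pi_p,\pi_q)$ (so $f^N(u)$ is close to $p$, hence to $a_k$) and thereafter follows $a_k$; shadowing in $B$ yields a tracing point $x\in B$ with $d(x,u)\le\epsilon$ that forward-shadows $a_k$. A tail estimate then gives $d(f^n(x),f^n(q))<\mathfrak{c}$ for all large $n$, and asymptotic expansivity (as noted in the proof of Lemma \ref{L5.1}) upgrades this to $x\in W^s(q)\cap B$; letting $\epsilon\to 0$ gives $u\in C_q$. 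Applied symmetrically, whenever $p',q'\in Per(f)\cap B$ satisfy $d(p',q')<\delta$ each lies in the other's $C$-set, so $C_{p'}=C_{q'}$; this local constancy plus a compactness covering of $B$ by $\delta/2$-balls shows there are only finitely many distinct sets $C^{(1)},\dots,C^{(s)}$, each clopen, whose union is closed and contains the dense set $Per(f)\cap B$, hence equals $B$.

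It remains to see that distinct $C^{(t)}$ are disjoint and to organize the cyclic family. If $C_p\cap C_q\neq\emptyset$ with $p,q$ periodic, the intersection is clopen and nonempty, so contains a periodic $r$; the splicing lemma gives $C_r\subseteq C_p$ and $C_r\subseteq C_q$, and the reverse inclusions reduce to the symmetry assertion $r\in C_p\Rightarrow p\in C_r$. This symmetry is the main obstacle I expect: the splicing argument shows that $p\in C_q$ is equivalent to the existence of arbitrarily good chains $p\to q$ of length $\equiv 0\pmod{\pi_q}$, and converting a good chain $q\to p$ of admissible length into a good chain $p\to q$ of admissible length requires the transitivity (hence chain transitivity) of $f|_B$ together with the residue-class-mod-period structure of chain lengths in a chain transitive system. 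Granting symmetry, parts (2) and (3) follow immediately and the $C^{(t)}$ partition $B$. For part (4), set $\mathsf{m}$ to be the least positive integer with $C_{f^{\mathsf{m}}(p)}=C_p$ (finite since $C_{f^{\pi_p}(p)}=C_p$); then $C_p,C_{f(p)},\dots,C_{f^{\mathsf{m}-1}(p)}$ are distinct, $f$ permutes them cyclically by part (1), and they are pairwise disjoint by part (3). Their union is a nonempty clopen $f$-invariant subset of $B$, hence equals $B$ by transitivity, giving (4a) and (4b). Finally $f^{\mathsf{m}}|_{C_p}$ is transitive with the shadowing property, and the maximality built into $\mathsf{m}$ leaves it with no nontrivial cyclic decomposition, so the standard fact that a topologically transitive map with the shadowing property and trivial cyclic decomposition is topologically mixing yields (4c).
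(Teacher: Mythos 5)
Your part (1), your splicing lemma, and your route to (4a)--(4b) are sound: the localization of $C_p$ via Lemma \ref{L5.2} plus density of periodic points (Lemma \ref{L5.1}) is exactly the paper's argument for openness, the splicing step ($p\in C_q\Rightarrow W^s(p)\cap B\subseteq C_q$ via a pseudo-orbit, shadowing in $B$, and asymptotic expansivity) is the paper's core mechanism for (2), and deducing $B=\bigcup_{i} C_{f^i(p)}$ from ``a nonempty clopen invariant set in a transitive system is everything'' is a legitimate simplification of the paper's $U_\delta$-argument. The first genuine gap is the symmetry assertion $q\in C_p\Rightarrow p\in C_q$ (for periodic $p,q$), which you call ``the main obstacle'' and then simply grant; your sketched alternative via residue classes of chain lengths is vague and never carried out, and without symmetry your (2) and (3) are unproven. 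In fact symmetry follows in three lines from tools you already have, and this is exactly how the paper argues: since $q\in C_p=\overline{W^s(p)\cap B}$, fix $x\in W^s(p)\cap B$ within your localization radius of $q$; localization applied to $C_q$ (note $q\in C_q$) gives $x\in C_q$; since $f(C_q)=C_{f(q)}$, $f^l(q)=q$ and $f^m(p)=p$, we get $f^{mlj}(x)\in C_q$ for every $j\in\mathbb{N}$, while $x\in W^s(p)$ forces $d(f^{mlj}(x),p)=d(f^{mlj}(x),f^{mlj}(p))\rightarrow 0$; closedness of $C_q$ then yields $p\in C_q$. No chain-length bookkeeping is needed.

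The second gap is (4c). Your argument needs (i) transitivity and shadowing for $f^{\mathsf{m}}|_{C_p}$ (asserted, though fillable: if $f^n(U)\cap V\neq\emptyset$ with $U,V\subseteq C_p$ open, then $C_{f^{n\bmod\mathsf{m}}(p)}\cap C_p\neq\emptyset$, so disjointness forces $\mathsf{m}\mid n$), and above all (ii) the claim that ``the maximality built into $\mathsf{m}$'' forbids a nontrivial cyclic decomposition of $f^{\mathsf{m}}|_{C_p}$. Claim (ii) is not justified, and the reason you give is the wrong one: minimality of $\mathsf{m}$ only constrains the sets $C_{f^i(p)}$, and nothing you proved prevents $C_p$ itself from splitting into $j\geq 2$ clopen pieces $D_0,\dots,D_{j-1}$ cyclically permuted by $f^{\mathsf{m}}$. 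What actually forbids this is that $C_p$ is the closure of a single stable set: if $p\in D_0$ and $y\in W^s(p)\cap B$ lay in $D_r$, then $f^{\mathsf{m}t}(y)\in D_{(r+t)\bmod j}$ and $f^{\mathsf{m}t}(p)\in D_{t\bmod j}$ become arbitrarily close while the pieces stay a positive distance apart, forcing $r=0$ and hence $C_p\subseteq D_0$. Moreover, even granting (ii), you close with the external fact ``transitive $+$ shadowing $+$ no nontrivial cyclic decomposition $\Rightarrow$ mixing'', which is essentially the statement being proved. The paper instead proves mixing directly: for a periodic $q$ in the target open set $V$, the sets $W^s(f^{r\mathsf{m}}(q))\cap B$ are dense in $C_{f^k(p)}$, so one picks points of $U$ in each of them and uses uniform continuity of powers of $f$ to land iterates of $U$ in $V$ for every sufficiently large multiple of $\mathsf{m}$; reconstructing that argument (or a full proof of your cited fact) is needed to complete (4c).
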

	
\begin{proof}
Choose a $B\in \lbrace B_{i}|1\leq i\leq k\rbrace$, $p\in Per(f)\cap B$ and least possible $m\in \mathbb{N}^{+}$ such that $f^{m}(p)=p$. Suppose that $f$ is  bi-asymptotically $c$-expansive with a bi-asymptotic c-expansivity constant $\mathfrak{c}$.
\par
\vspace*{0.15cm}

%\textit{(1)} First we show that $f(W^{s}(p)\cap B)=W^{s}(f(p))\cap B$. Fix a $x\in W^{s}(p)\cap B$. Since $f(B)=B$, we get that $f(x)\in W^{s}(f(p))\cap B$ and hence $f(W^{s}(p)\cap B)\subseteq W^{s}(f(p))\cap B$. Now fix a $y\in W^{s}(f(p))\cap B$. Then there exists a $x\in B$ such that $f(x)=y$ and $\lim\limits_{i\rightarrow\infty}d(f^{i+1}(x),f^{i+1}(p))=\lim\limits_{i\rightarrow\infty}d(f^{i}(y),f^{i}(f(p)))=0$ implying that $x\in W^{s}(p)\cap B$  with $f(x)=y$ and hence $W^{s}(f(p))\cap B \subseteq f(W^{s}(p)\cap B)$. Note that $f$ being continuous function on a compact metric space is closed and hence  $f(\overline{W^{s}(p)\cap B})=\overline {f(W^{s}(p)\cap B))}=\overline{W^{s}(f(p))\cap B}$. This implies that $f(C_{p})=C_{f(p)}$.
\textit{(1)} Since $B$ is $f$-invariant and $f$ is a closed map, we get that $f(\overline{W^{s}(p)\cap B})$ $=\overline {f(W^{s}(p)\cap B))}=\overline{W^{s}(f(p))\cap B}$. Hence $f(C_{p})=C_{f(p)}$.
%
%\par
%Now we show that $C_{f(p)}\subseteq f(C_{p})$. Let $y\in C_{f(p)}=\overline{W^{s}(f(p))\cap B}$ and $(y_{n})_{n\geq 0}$ be a sequence such that  $y_{n}\in W^{s}(f(p))\cap B$, for each $n\geq 0$ and $(y_{n})_{n\geq 0}$ converges to $y$. By previous fact there exists a sequence $(x_{n})_{n\geq 0}$ such that $x_{n}\in W^{s}(p)\cap B$ and $f(x_{n})=y_{n}$, for each $n\geq 0$. Note that each $x_{n}\in C_{p}$ and $C_{p}$ is compact. Thus there exists a subsequence $(x_{n_{k}})_{k\geq 0}$ of $(x_{n})_{n\geq 0}$, converging to $x\in C_{p}$, say. By continuity of $f$, the subsequence $(f(x_{n_{k}}))_{k\geq 0}$ of $(y_{n})_{n\geq 0}$ converges to $f(x)$. It follows that $f(x)=y$. This shows that $y=f(x)\in f(C_{p})$. This completes the proof.
%\par
%Note that $f$ being continuous function on a compact metric space is closed and hence  $f(\overline{W^{s}(p)\cap B)}=\overline {f(W^{s}(p)\cap B))}=\overline{W^{s}(f(p))\cap B}$.
%
%
%
%\par
%\vspace*{0.15cm}
%
We now claim that $C_{p}$ is open in $B$. From Remark \ref{R4.2}(1) and Remark \ref{R5.3}, we get that $f|_{B}$ is a bi-asymptotically $c$-expansive continuous surjective map on a compact metric space $B$ with the shadowing property. Choose a $\delta > 0$ by Lemma \ref{L5.2} corresponding to $f|_{B}$. We define $U_{\delta}(C_{p})=\{\mathsf{w}\in B\mid d(\mathsf{w},C_{p})<\delta\}$. 
%Observe that from the proof of openness of $B$ in $\Omega (f)$, to show $C_{p}$ is open in   $B$, 
We first claim that $U_{\delta}(C_{p})\cap Per(f)\subseteq C_{p}$. 
Choose a $q\in U_{\delta}(C_{p})\cap Per(f)$ with $f^{l}(q)=q$, for some $l\in\mathbb{N}^{+}$, $x\in W^{s}(p)\cap B$ such that $d(x,q)<\delta$ and an $l$-periodic orbit $(q_n )\in B_f$ with $q_{0} =q$.
From Lemma \ref{L5.2}, we get that  $W^{u}((q_n))\cap W^{s}(x)\neq \emptyset$. Thus there exists a $w\in W^{u}((q_n))\cap W^{s}(x)$ and a $(w_{n})\in B_{f}$ with $w_{0}=w$ such that $\lim\limits_{n\rightarrow \infty}d(w_{-n},q_{-n})= 0$. Since $W^{s}(x)=W^{s}(p)$ and so, $w\in W^{s}(p)$, we get that $\lim\limits_{n\rightarrow \infty}d(f^{n}(w),f^{n}(p))=0$. 
For a given $\mathsf{N}\in \mathbb{N}$, set $\mathsf{n}=ml\mathsf{N}+n$, for each $n\in \mathbb{N}$. Since $f^{ml\mathsf{N}}(w_{-ml\mathsf{N}})=w$ and $f^{ml\mathsf{N}}(p)=p$, we get that $\lim\limits_{\mathsf{n}\rightarrow\infty}d(f^{\mathsf{n}}(w_{-ml\mathsf{N}}),f^{\mathsf{n}}(p))=\lim\limits_{n\rightarrow\infty}d(f^{n}(w),f^{n}(p))=0$ implying that $w_{-ml\mathsf{N}}\in W^{s}(p)$. Since $\mathsf{N}$ is chosen arbitrarily, we get that $w_{-ml\mathsf{N}}\in W^{s}(p)\cap B$, for each $\mathsf{N}\in \mathbb{N}$ implying that $q\in \overline{W^{s}(p)\cap B}=C_{p}$. 
From Lemma \ref{L5.1}, we further obtain that $U_{\delta}(C_{p}) \subseteq	 (U_{\delta}(C_{p})\cap\overline{Per(f)}) \subseteq (\overline{U_{\delta}(C_{p})\cap Per(f)})\subseteq C_{p}$ establishes that $C_{p}$ is open in $B$.
\par
\vspace{0.15cm}

\textit{(2)} Choose a $\delta > 0$ by Lemma \ref{L5.2} corresponding to $f|_{B}$ and a $q\in C_{p}\cap Per(f)\subseteq B\cap Per(f)$ with $f^{l}(q)=q$, for some $l\in \mathbb{N}^{+}$. Recall that in \textit{(1)}, we use Lemma \ref{L5.1} to establish that $U_{\delta}(C_{q})=C_{q}$. Fix an $x\in W^{s}(p)\cap B$ such that $d(x,q)< \delta$ and so, $x\in C_{q}$. From \textit{(1)}, we get that $f^{mlj}(x)\in C_{q}$, for each $j\in \mathbb{N}$. Since $x\in W^{s}(p)$, we have $\lim\limits_{j\rightarrow \infty}d(f^{mlj}(x),p)=0$ implying that $p\in C_{q}$.
Now we choose a $y\in W^{s}(p)\cap B$. Since $y\in W^{s}(p)$, $p\in C_{q}$ and $C_{q}$ is open in $B$, we get that $\lim\limits_{j\rightarrow \infty}d(f^{mlj}(y), p)=0$ and $f^{ml\mathsf{j}}(y)\in C_{q}$, for large enough $\mathsf{j}$. We first claim that $y\in C_{q}$. 
Choose $0<\epsilon<\mathfrak{c}$. For this $\epsilon$ and $f|_{B}$, choose an $\eta>0$ from Remark \ref{R2.1}(1). Since $f^{ml\mathsf{j}}(y)\in C_{q}$, we fix a $w\in W^s(q)\cap B$ such that $d(f^{ml\mathsf{j}}(y),w)<\eta$ and define an $\eta$-pseudo orbit $(z_{n})$ in $B$ by $z_{-ml\mathsf{j}-(n+1)}\in f^{-1}(z_{-ml\mathsf{j}-n})$, $z_{-ml\mathsf{j}}=y, z_{-ml\mathsf{j}+1}=f(y),\ldots, z_{-1}=f^{ml\mathsf{j}-1}(y), z_{n}=f^{n}(w)$, for each $n\in \mathbb{N}$.
Thus there exists an $(u_{n})\in B_{f}$ such that $d(z_{n},u_{n})< \epsilon <\mathfrak{c}$, for each $n\in \mathbb{Z}$ implying that $u_{0}\in W^u((z_{n}))\cap W^{s}(w)$. Since $W^{s}(w)=W^{s}(q)$, we get that $u_{0}\in W^{s}(q)$ and $d(y,u_{-ml\mathsf{j}})=d(z_{-ml\mathsf{j}},u_{-ml\mathsf{j}})< \epsilon$.
Thus we get that $\lim\limits_{n\rightarrow\infty}d(f^{ml\mathsf{j}+n}(u_{-ml\mathsf{j}}),f^{ml\mathsf{j}+n}(q))$ $=\lim\limits_{n\rightarrow\infty}d(f^{n}(u_{0}),f^{n}(q))=0$ implying that $u_{-ml\mathsf{j}}\in W^{s}(q)\cap B$. Since $\epsilon$ and $y$ are chosen arbitrarily, we get that $W^{s}(p)\cap B\subseteq C_{q}$ and hence $C_{p}\subseteq C_{q}$.
Since $p\in Per(f)$, following similar arguments we can also establish that $C_{q}\subseteq C_{p}$ which completes the proof.
\par
\vspace{0.15cm}

\textit{(3)} Choose a $q\in Per(f)\cap B $ with $f^{l}(q)=q$, for some $l\in \mathbb{N}^{+}$ such that $C_{p}\cap C_{q}\neq \emptyset$. Since $C_{q}$ is open in $B$, we get that $W^{s}(p)\cap B \cap C_{q}\neq \emptyset$. Thus we choose an $x\in W^{s}(p)\cap B\cap C_{q}$ such that $\lim\limits_{j\rightarrow \infty}d(f^{mlj}(x), p)=0 $ and $f^{mlj}(x)\in f^{mlj}(C_{q})\subseteq C_{q}$, for each $j\in \mathbb{N}$ implying that $p\in C_{q}$. Following similar steps as in the proof of \textit{(2)}, we get that $C_{p}=C_{q}$.
\par
\vspace{0.15cm}

%Note that $C_{f^{m}(p)}=C_{p}$, so by (3) and (1) there exists a least $\mathsf{m}\in \mathbb{N}$ such that $\mathsf{m}\leq m$, $C_{f^{\mathsf{m}(p)}}=C_{p}$ and $C_{f^{i}(p)}\cap C_{f^{j}(p)}=\emptyset$, for each $i\neq j$, $0\leq i,j\leq \mathsf{m}-1$.

\textit{(4)} Since $C_{f^{m}(p)}=C_{p}$, we choose the least possible $\mathsf{m}\in \mathbb{N}^{+}$ such that $C_{f^{\mathsf{m}}(p)}=C_{p}$. From \textit{(1)} and \textit{(3)}, we get that $C_{f^{i}(p)}\cap C_{f^{j}(p)}=\emptyset$ and $f(C_{f^{i}(p)})=C_{f^{i+1}(p)}$, for all $i\neq j$ and all $0\leq i,j\leq \mathsf{m}-1$. We claim that $\bigcup\limits_{i=0}^{\mathsf{m}-1}C_{f^{i}(p)} = B$. Firstly we note that $\bigcup\limits_{i=0}^{\mathsf{m}-1}C_{f^{i}(p)} \subseteq B$. Choose a $\delta>0$ by Lemma \ref{L5.2} corresponding to $f|_{B}$, $z\in B$ and an open set $V = B(z,\delta)\cap B$ containing $z$. From \textit{(1)} and  Remark \ref{R5.3}, we recall that $C_{p}$ is open in $B$ and $f|_{B}$ is topologically transitive respectively. Thus we choose a $w\in C_{p}$ and an integer $k>0$ such that $f^{k}(w)\in V$ implying that $d(f^{k}(w), z)<\delta$. Since $f(C_{p})= C_{f(p)}$, we get that $f^{k}(w)\in C_{f^{k}(p)}$ and hence $z\in U_{\delta}(C_{f^{k}(p)})= \lbrace \mathsf{w}\in B\mid d(\mathsf{w}, C_{f^{k}(p)}) < \delta \rbrace =  C_{f^{k}(p)}\subseteq \bigcup\limits_{i=0}^{\mathsf{m}-1}C_{f^{i}(p)}$ implying that $B=\bigcup\limits_{i=0}^{\mathsf{m}-1}C_{f^{i}(p)}$. 
\par
%Since $B$ is $f$-invariant and $\mathsf{m}$ is the least positive integer such that $C_{f^{\mathsf{m}}(p)}=C_{p}$, we use \textit{(1)} to get that $B=\bigcup\limits_{i=0}^{\mathsf{m}-1}f(C_{f^{i}(p)})$. \textbf{Note that $C_{f^{i}(p)},~0\leq i\leq \mathsf{m}-1$ are pairwise disjoint sets.} Therefore $C_{f^{i}(p)}\cap C_{f(p)}=\emptyset$, for $i\neq 1$. From $f(C_{f^{i}(p)})\subseteq C_{f^{i+1}(p)}$, we get that $C_{f(p)}\cap f(C_{f^{i}(p)}) =\emptyset$, for all $i\neq 0$. Since $C_{f(p)}\subseteq B =\bigcup\limits_{i=0}^{\mathsf{m}-1}f(C_{f^{i}(p)})$ and $C_{f(p)}\cap f(C_{f^{i}(p)}) = \emptyset$, for all $i\neq 0$, we get that $C_{f(p)}\subseteq f(C_{p})$. Hence $f(C_{p})= C_{f(p)}$ implying that $f(C_{f^{i}(p)})=C_{f^{i+1}(p)}$, for each $0\leq i\leq \mathsf{m}-1$.
\vspace*{0.15cm}

We now claim that $f^{\mathsf{m}}|_{C_{f^{i}(p)}}:{C_{f^{i}(p)}}\rightarrow {C_{f^{i}(p)}}$ is topologically mixing, for each $0\leq i\leq \mathsf{m}-1$. Fix $0\leq k\leq \mathsf{m}-1$. Choose a pair of non-empty open sets $U$ and $V$ of $C_{f^{k}(p)}$. From \textit{(1)} and Lemma \ref{L5.5}, we get that $C_{f^{k}(p)}$ is open in $B$ and $B$ is open in $\Omega(f)$ respectively implying that $V$ is open in $\Omega(f)$. 
We use Lemma \ref{L5.1} to choose a $q\in Per(f)\cap V$, $l\in\mathbb{N}^{+}$ such that $f^{l}(q)=q$ and an $\epsilon>0$ such that $B(q,\epsilon)\cap C_{f^{k}(p)}\subseteq V$. Since $q\in C_{f^{k}(p)}$ and sets $C_{f^{k}(p)},C_{f^{k+1}(p)},\ldots, C_{f^{\mathsf{m}-1}(p)},C_{p},C_{f(p)},\ldots,C_{f^{k-1}(p)}$ are pairwise disjoint, we get that $l=n_{k}\mathsf{m}$, for some $n_{k}\in \mathbb{N}^{+}$ and $C_{f^{k}(p)}=C_{q}$. From uniform continuity of $f^{\mathsf{m}}$, we choose an $\eta>0$ such that if $u,v\in X$ satisfy $d(u,v)<\eta$, then $d(f^{n_{k}\mathsf{m}-r\mathsf{m}}(u), f^{n_{k}\mathsf{m}-r\mathsf{m}}(v))<\epsilon$, for all $ 0\leq r\leq n_{k}-1$.
%Clearly, $q\in C_{p}\cap Per(f)$, and so $C_{p}=C_{q}$ by (2). 
Thus we have $C_{f^{r\mathsf{m}}(q)}=f^{r\mathsf{m}}(C_{q})$ $=f^{r\mathsf{m}}(C_{f^{k}(p)})=C_{f^{k}(p)}$, for all $0\leq r\leq n_{k}-1$ implying that $W^{s}(f^{r\mathsf{m}}(q))\cap B$ is dense in $C_{f^{k}(p)}$, for all $0\leq r\leq n_{k}-1$. 
Since $U$ is open in $C_{f^{k}(p)}$, there exists a $z_{r}\in (W^{s}(f^{r\mathsf{m}}(q))\cap B)\cap U$ implying that $\lim\limits_{t\rightarrow \infty}d(f^{tn_{k}\mathsf{m}}(z_{r}),f^{tn_{k}\mathsf{m}}(f^{r\mathsf{m}}(q)))= 0$, for each $0\leq r\leq n_{k}-1$. Choose an $N_{r}\in \mathbb{N}^{+}$ such that $d(f^{tn_{k}\mathsf{m}}(z_{r}),f^{tn_{k}\mathsf{m}}(f^{r\mathsf{m}}(q)))<\eta$, for each $0\leq r\leq n_{k}-1$ and for each $t\geq N_{r}$. Therefore $d(f^{tn_{k}\mathsf{m}+n_{k}\mathsf{m}-r\mathsf{m}}(z_{r}),f^{tn_{k}\mathsf{m}+r\mathsf{m}+n_{k}\mathsf{m}-r\mathsf{m}}(q))= d(f^{tn_{k}\mathsf{m}+n_{k}\mathsf{m}-r\mathsf{m}}(z_{r}),q)<\epsilon $, for each $0\leq r\leq n_{k}-1$ and for each $t\geq N_{r}$. Since $z_{r}\in (W^{s}(f^{r\mathsf{m}}(q))\cap B)\subseteq C_{f^{r\mathsf{m}}(q)}= C_{f^{k}(p)}$ and $f(C_{p})=C_{f(p)}$, we get that $f^{\mathsf{m}(tn_{k}+n_{k}-r)}(z_{r})\in C_{f^{k}(p)}$ implying that $f^{\mathsf{m}(tn_{k}+n_{k}-r)}(z_{r})\in B(q,\epsilon)\cap C_{f^{k}(p)}\subseteq V$, for each $ 0\leq r\leq n_{k}-1$ and for each $t\geq N_{r}$.
Set $N_{k}=\max\{N_{0},N_{1},\ldots, N_{n_{k}-1}\}$ to get that $f^{\mathsf{m}(tn_{k}+n_{k}-r)}(U)\cap V\neq \emptyset$, for each $0\leq r\leq n_{k}-1$ and for each $t\geq N_{k}$. Therefore $f^{s\mathsf{m}}(U)\cap V\neq \emptyset$, for each $s> n_{k}N_{k}$. Since $U$, $V$ and $k$ are chosen arbitrarily, we get that $f^{\mathsf{m}}|_{C_{f^{i}(p)}}:{C_{f^{i}(p)}}\rightarrow {C_{f^{i}(p)}}$ is topologically mixing, for each $0\leq i\leq \mathsf{m}-1$. 
\end{proof}

\begin{T1.1}
Let $f:X\rightarrow X$ be a bi-asymptotically $c$-expansive continuous surjective map on a compact metric space $X$ with the shadowing property. Proof of the Theorem \ref{T1.1}\textit{(1)} follows from Remark \ref{R5.3}, Remark \ref{R5.4} and Lemma \ref{L5.5}, and proof of the Theorem \ref{T1.1}\textit{(2)} from Lemma \ref{L5.6}.
\end{T1.1}

\begin{flushleft}
\justifying
\textbf{Acknowledgement:} 
The first author is supported by CSIR - Junior Research Fellowship (File No.-09/045(1824)/2021-EMR-I) of Government of India.
\end{flushleft}


\begin{thebibliography}{}
\bibitem{A} D.V. Anosov, Geodesic flows on closed Riemann manifolds with negative curvature, Translated from the Russian by  S. Feder Proc. Steklov Inst.  Math., 90 (1967). Amer. Math. Soc., Providence, R.I. (1969).
\bibitem{A1}  N. Aoki, On homeomorphisms with pseudo-orbit tracing property, Tokyo J. Math., 6 (2), 329-334 (1983).
\bibitem{AH} N. Aoki and K. Hiraide, Topological theory of dynamical systems: recent advances,  North-Holland Math. Library, vol. 52, North-Holland Publishing Co., Amsterdam (1994).				
\bibitem {B} R. Bowen, Periodic points and measures for axiom A diffeomorphisms, Trans. Amer. Math. Soc., 154, 377–397 (1971).		
\bibitem {B1} R. Bowen, $w$-limit sets for axiom A diffeomorphisms, J. Differential Equations, 18 (2), 333-339 (1975).
\bibitem{CC} B. Carvalho and W. Cordeiro, $N$-expansive homeomorphisms with the shadowing property, J. Differential Equations, 261 (6), 3734–3755 (2016).
\bibitem{DDS} R. Das, T. Das and S. Shah,  Bowen's  decomposition theorem for topologically Anosov homeomorphisms on noncompact and non-metrizable spaces, Commun. Korean Math. Soc., 33 (1), 337-344 (2018).
\bibitem{DLRW} T. Das, K. Lee, D. Richeson and J. Wiseman, Spectral decomposition for topologically Anosov homeomorphisms on noncompact and non-metrizable spaces, Topology Appl., 160 (1), 149-158 (2013).
\bibitem{E} M. Eisenberg, Expansive transformation semigroups of endomorphisms, Fund. Math., 59, 307-312 (1966).		
\bibitem{kDD} A.G. Khan, P. Das and T. Das, Sequential shadowing implies spectral decomposition, Topology Proc., 60, 169-179 (2022).
\bibitem{LLN} T-H. Le, K. Lee and N. Nguyen, Spectral decomposition and stability of mild expansive systems, Topol. Methods Nonlinear Anal., 56 (1), 63–81 (2020).
\bibitem {LMV} K. Lee, C.A. Morales and H. Villavicencio, Asymptotic expansivity, J. Math. Anal. Appl., 507 (1), 125729: 1-21 (2022).
\bibitem{M} C.A. Morales, A generalization of expansivity, Discrete Contin. Dyn. Syst., 32 (1), 293-301 (2012).
\bibitem{U} W.R. Utz, Unstable homeomorphisms, Proc. Amer. Math. Soc., 1 (6), 769-774 (1950).
\end{thebibliography}
\end{document}